\documentclass[10pt]{article}
\pdfoutput=1

\usepackage[utf8]{inputenc}
\usepackage[T1]{fontenc}
\usepackage{lmodern}

\usepackage{amsmath}
\usepackage{amssymb}
\usepackage{amsthm}
\usepackage{mathtools}
\usepackage{dsfont}
\usepackage{aliascnt}

\usepackage{geometry}
\usepackage{enumitem}
\usepackage{xcolor}

\usepackage{natbib}
\usepackage[unicode]{hyperref}
\usepackage{cleveref}

\setcitestyle{numbers,open={[},close={]}}

\definecolor{linkcol}{rgb}{0.7,0.15,0.15}
\definecolor{citecol}{rgb}{0,0.5,0}
\definecolor{urlcol}{rgb}{0,0,0.7}
\hypersetup{colorlinks, linkcolor=linkcol, citecolor=citecol, urlcolor=urlcol}

\numberwithin{equation}{section}

\DeclareUnicodeCharacter{014D}{\=o}
\newtheorem{theorem}{Theorem}[section]
\crefname{theorem}{theorem}{theorems}
\Crefname{theorem}{Theorem}{Theorems}

\newaliascnt{assumption}{theorem}
\newtheorem{assumption}[assumption]{Assumption}
\aliascntresetthe{assumption}
\crefname{assumption}{assumption}{assumptions}
\Crefname{assumption}{Assumption}{Assumptions}

\newaliascnt{corollary}{theorem}
\newtheorem{corollary}[corollary]{Corollary}
\aliascntresetthe{corollary}
\crefname{corollary}{corollary}{corollaries}
\Crefname{corollary}{Corollary}{Corollaries}

\newaliascnt{example}{theorem}

\aliascntresetthe{example}
\crefname{example}{example}{examples}
\Crefname{example}{Example}{Examples}

\newaliascnt{exercise}{theorem}

\aliascntresetthe{exercise}
\crefname{exercise}{exercise}{exercises}
\Crefname{exercise}{Exercise}{Exercises}

\newaliascnt{lemma}{theorem}

\aliascntresetthe{lemma}
\crefname{lemma}{lemma}{lemmas}
\Crefname{lemma}{Lemma}{Lemmas}

\newaliascnt{proposition}{theorem}
\newtheorem{proposition}[proposition]{Proposition}
\aliascntresetthe{proposition}
\crefname{proposition}{proposition}{propositions}
\Crefname{proposition}{Proposition}{Propositions}

\newaliascnt{condition}{theorem}

\aliascntresetthe{condition}
\crefname{condition}{condition}{conditions}
\Crefname{condition}{Condition}{Conditions}

\newaliascnt{definition}{theorem}
\newtheorem{definition}[definition]{Definition}
\aliascntresetthe{definition}
\crefname{definition}{definition}{definitions}
\Crefname{definition}{Definition}{Definitions}

\newaliascnt{remark}{theorem}
\newtheorem{remark}[remark]{Remark}
\aliascntresetthe{remark}
\crefname{remark}{remark}{remarks}
\Crefname{remark}{Remark}{Remarks}

\def\no{\noindent}

\def\E{\mathbb{E}}
\def\F{\mathbb{F}}
\def\P{\mathbb{P}}
\def\Q{\mathbb{Q}}
\def\R{\mathbb{R}}

\def\Cc{{\cal C}}
\def\Fc{{\cal F}}
\def\Gc{{\cal G}}
\def\Kc{{\cal K}}
\def\Lc{{\cal L}}
\def\Pc{{\cal P}}
\def\Uc{{\cal U}}
\def\Vc{{\cal V}}
\def\Zc{{\cal Z}}

\def\Gr{{\rm G}}
\def\Ir{{\rm I}}
\def\Kr{{\rm K}}
\def\trace{{\rm Tr}}
\def\Lgen{\mathfrak{L}}

\def\Sbb{\mathbf{S}}
\def\Xbb{\mathbf{X}}
\def\Ubb{\boldsymbol{U}}
\def\xbb{\boldsymbol{x}}

\def\1{\mathds{1}}

 \title{A McKean--Vlasov semigroup and its application to a martingale representation problem}

\author{
 Mao Fabrice Djete\footnote{\'Ecole Polytechnique Paris, Centre de Math\'ematiques Appliqu\'ees, mao-fabrice.djete@polytechnique.edu. The author benefits from the financial support of the Chairs {\it Financial Risk} and {\it Finance and Sustainable Development}} 
 \and 
 Mattia Martini \footnote{\'Ecole Polytechnique Paris, Centre de Math\'ematiques Appliqu\'ees, mattia.martini@polytechnique.edu. The author thanks the support of the French ANR PEPR Math-Vives project MIRTE ANR-23-EXMA-0011}
   }
   
\date{\today}

\begin{document}

\maketitle
 
\begin{abstract}
We introduce a McKean--Vlasov semigroup in the presence of common noise and study its first-order differential structure. Given a terminal functional $G$, the semigroup is defined by propagating the initial law through a reference conditional McKean--Vlasov flow. Using a finite-particle approximation and a diagonal/off-diagonal decomposition of the tangent processes, we identify the Lions derivative of the propagated functional and derive a backward representation along arbitrary admissible conditional law flows. This formula involves only first-order derivatives in the measure variable and yields both a Feynman--Kac representation for linear equations on the Wasserstein space and, in the driftless case, a martingale representation problem whose unique solution is given by the McKean--Vlasov heat semigroup and its Lions derivative. We also discuss mixed finite-dimensional/Wasserstein representations and discrete path-dependent extensions.
\end{abstract}

\vspace{3mm}
\no{\bf Keywords.} McKean--Vlasov semigroup, common noise, martingale representation.

\vspace{3mm}
\no{\bf MSC2020.} 60H30, 60K35, 49N80, 47D07, 60G44, 35R15.

\section{Introduction}
The heat equation and the associated heat semigroup are classical objects in
the theory of parabolic partial differential equations. Standard references
include \citeauthor*{friedman1964partial} \cite{friedman1964partial}, \citeauthor*{ladyzhenskaya1968linear} \cite{ladyzhenskaya1968linear}, and \citeauthor*{evans2010partial} \cite{evans2010partial}. From
the semigroup point of view, the heat flow is the prototype of a strongly
continuous Markov semigroup generated by the Laplacian; see for instance
\citeauthor{pazy1983semigroups} \cite{pazy1983semigroups}, \citeauthor*{engel2000one} \cite{engel2000one}, and
\citeauthor*{ethier1986markov} \cite{ethier1986markov}. Its probabilistic interpretation through
Brownian motion, Itô's formula and the Feynman--Kac formula is classical; see
\citeauthor*{karatzas1991brownian} \cite{karatzas1991brownian}, \citeauthor*{revuz1999continuous} \cite{revuz1999continuous},
and \citeauthor*{stroock1979multidimensional} \cite{stroock1979multidimensional}. Let us recall this probabilistic interpretation and highlight the points we want to emphasize. Let
$u:[0,T]\times\R^d\to\R$ be smooth, say $u\in C^{1,2}$, and let
\[
X_t=X_0+W_t+\sigma_\circ W^\circ_t,
\]
where $W$ and $W^\circ$ are independent Brownian motions and
$\sigma_\circ\in\R^{d\times d}$. By Itô's formula, for every $0\le t\le T$,
\[
\begin{aligned}
u(t,X_t)
=
u(T,X_T)
-\int_t^T \nabla_x u(s,X_s)\cdot \mathrm{d}X_s
-\int_t^T
\left(
\partial_t u(s,X_s)
+
\frac12
\trace\left[
\nabla_x^2u(s,X_s)
\bigl(\Ir_d+\sigma_\circ\sigma_\circ^\top\bigr)
\right]
\right)\mathrm{d}s .
\end{aligned}
\]
Hence, if $u$ solves the backward heat equation
\[
\partial_t u(t,x)
+
\frac12
\trace \left[
\nabla_x^2u(t,x)
\bigl(\Ir_d+\sigma_\circ\sigma_\circ^\top\bigr)
\right]
=0,
\qquad (t,x)\in[0,T)\times\R^d,
\]
then the finite-variation term disappears and one obtains
\[
u(t,X_t)
=
u(T,X_T)
-
\int_t^T
\nabla_xu(s,X_s)\cdot \mathrm{d}X_s.
\]
Therefore, by taking the conditional expectation,
\[
u(t,x)
=
\Kr_{u(T,\cdot)}(t,x)
:=
\E\bigl[
u(T,x+W_T-W_t+\sigma_\circ(W^\circ_T-W^\circ_t))
\bigr].
\]
The operator $\Kr$ is the heat semigroup associated with the covariance $\Ir_d+\sigma_\circ\sigma_\circ^\top$.
Thus $(u(t,X_t))_{0\le t\le T}$ is a martingale with respect to the filtration
generated by $X$. The important point is that the backward representation only
contains the first spatial derivative $\nabla_xu$. The time derivative
$\partial_tu$ and the second--order derivative $\nabla_x^2u$ have been
absorbed by the heat equation.

\medskip
Our main interest is the converse point of view. Suppose that a pair of maps
$(u,v)$ satisfies
\begin{align} \label{eq:martingale}
u(t,X_t)
=
u(T,X_T)
-
\int_t^T
v(s,X_s)\cdot \mathrm{d}X_s .
\end{align}
In this Markovian setting, this is another formulation of the martingale
representation theorem when the map $u(T,\cdot)$ is given. Under suitable integrability assumptions on $v$,
taking conditional expectation gives
\[
u(t,X_t)
=
\E\bigl[u(T,X_T)\mid \Fc_t\bigr]
=
\Kr_{u(T,\cdot)}(t,X_t).
\]
By identification of the martingale integrand, one then obtains
\[
v(t,X_t)
=
\nabla_x\Kr_{u(T,\cdot)}(t,X_t).
\]
Therefore, the heat semigroup not only provides examples of pairs satisfying
\eqref{eq:martingale}; it also characterizes them. It is worth mentioning that no time
derivative of $u$ and no second--order derivative in space are required in
the formulation of the representation. The heat semigroup is precisely the
object which produces, and under natural conditions uniquely produces, such a
first-order martingale dynamics.

\medskip
The same picture holds for a diffusion with drift. If
\[
\mathrm{d}X_t
=
B(t,X_t)\,\mathrm{d}t
+
\mathrm{d}W_t+\sigma_\circ\,\mathrm{d}W^\circ_t,
\]
then, by a Girsanov transformation, the martingale identity for the Brownian
dynamics can be transported to the diffusion $X$. Along the trajectory of
$X$, the pair
\[
\bigl(\Kr_{u(T,\cdot)},\nabla_x\Kr_{u(T,\cdot)}\bigr)
\]
is characterized by the identity
\begin{align} \label{eq:martingale_drift}
u(t,X_t)
=
u(T,X_T)
-
\int_t^T
v(s,X_s)\cdot B(s,X_s)\,\mathrm{d}s
-
\int_t^T
v(s,X_s)\cdot
\bigl(\mathrm{d}W_s+\sigma_\circ\,\mathrm{d}W^\circ_s\bigr).
\end{align}
This is the Markovian form of the martingale representation principle and is
one of the structural reasons why classical Backward Stochastic Differential Equations i.e. BSDEs are well posed under weak
assumptions on the terminal condition (see  \citeauthor*{pardoux1990adapted} \cite{pardoux1990adapted},
\citeauthor*{el1997backward} \cite{el1997backward}, $\dots$). Indeed, by the smoothing effect of the
heat kernel, or equivalently by an integration--by--parts formula, one does not
need differentiability of the terminal map $x\mapsto u(T,x)$ in order to
obtain differentiability of the propagated map for $t<T$. Moreover, in finite
dimension, \eqref{eq:martingale} and \eqref{eq:martingale_drift} are essentially
equivalent through Girsanov's theorem. Hence one may focus on the driftless case
without losing the substance of the argument.

\medskip
\paragraph*{Our contribution.}The purpose of this paper is to formulate and prove an analogue of the preceding
properties for a suitable semigroup associated with McKean--Vlasov dynamics in the presence of common noise. 
A main difficulty in the space of probability measures is that these convenient
features no longer transfer directly. In particular, the analogue of the
Girsanov reduction is not available in the same way, and the smoothing mechanism
of a possible heat semigroup on the Wasserstein space must be analyzed directly.

\medskip
Let
\[
\Gc_t:=\sigma\{\sigma_\circ W^\circ_r:\ 0\le r\le t\}
\]
be the filtration generated by the common noise. If
\[
X_t=X_0+W_t+\sigma_\circ W^\circ_t,
\]
then the relevant state variable is the conditional law
\[
\mu_t:=\Lc(X_t\mid\Gc_t).
\]
Let $\Uc:[0,T]\times\Pc(\R^d)\to\R$ be a smooth functional. The Itô formula
on the flow $(\mu_t)_{0\le t\le T}$ (see, e.g., \cite[Theorem 4.14]{carmona2018probabilisticII}) gives, at least formally,
\[
\begin{aligned}
\Uc(t,\mu_t)
&=
\Uc(T,\mu_T)
-
\int_t^T
\left(
\int_{\R^d}
\nabla_x\delta_m\Uc(s,\mu_s)(x)\,\mu_s(\mathrm{d}x)
\right)
\cdot\sigma_\circ\,\mathrm{d}W^\circ_s
\\
&\quad
-\int_t^T
\Bigg[
\partial_t\Uc(s,\mu_s)
+
\frac12
\int_{\R^d}
\trace \left[
\nabla_x^2\delta_m\Uc(s,\mu_s)(x)
\bigl(\Ir_d+\sigma_\circ\sigma_\circ^\top\bigr)
\right]\mu_s(\mathrm{d}x)
\\
&\qquad\qquad
+
\frac12
\int_{\R^d\times\R^d}
\trace \left[
\nabla_y\nabla_x\delta_m^2\Uc(s,\mu_s)(x,y)
\sigma_\circ\sigma_\circ^\top
\right]\mu_s(\mathrm{d}x)\mu_s(\mathrm{d}y)
\Bigg]\mathrm{d}s,
\end{aligned}
\]
where the notation $\delta_m \Uc$ stands for the linear functional derivative of $\Uc$ (see equation \eqref{eq:def_linear_func} below for the definition).
The last term is specific to the common-noise setting and comes from the
quadratic variation of the conditional law itself. Thus the natural analogue of
the heat equation on the Wasserstein space is
\[
\begin{aligned}
0
&=
\partial_t\Uc(t,m)
+
\frac12
\int_{\R^d}
\trace \left[
\nabla_x^2\delta_m\Uc(t,m)(x)
\bigl(\Ir_d+\sigma_\circ\sigma_\circ^\top\bigr)
\right]m(\mathrm{d}x)
\\
&\quad+
\frac12
\int_{\R^d\times\R^d}
\trace \left[
\nabla_y\nabla_x\delta_m^2\Uc(t,m)(x,y)
\sigma_\circ\sigma_\circ^\top
\right]m(\mathrm{d}x)m(\mathrm{d}y).
\end{aligned}
\]
If $\Uc$ solves this equation, then the finite-variation part vanishes and
one obtains
\[
\Uc(t,\mu_t)
=
\Uc(T,\mu_T)
-
\int_t^T
\left(
\int_{\R^d}
\nabla_x\delta_m\Uc(s,\mu_s)(x)\,\mu_s(\mathrm{d}x)
\right)
\cdot\sigma_\circ\,\mathrm{d}W^\circ_s .
\]
As in the finite-dimensional case, the backward dynamics then involves only
the first-order derivative $\nabla_x\delta_m\Uc$ which is usually called the Lions derivative. The time derivative
and the second--order derivatives in the measure variable are no longer visible
in the representation.

\medskip

This leads to the analog of the converse question raised in \eqref{eq:martingale}. What are the
properties of a pair $(\Uc,\Vc)$ satisfying
\[
\Uc(t,\mu_t)
=
\Uc(T,\mu_T)
-
\int_t^T
\left(
\int_{\R^d}
\Vc(s,\mu_s)(x)\,\mu_s(\mathrm{d}x)
\right)
\cdot\sigma_\circ\,\mathrm{d}W^\circ_s ?
\]
Can one construct an analog of the heat semigroup on the Wasserstein space
which regularizes a terminal functional and produces such a representation
using only first-order derivatives? As in the finite-dimensional case, such a representation is expected to be
closely related to parabolic equations on the space of probability measures and
to their probabilistic interpretation. In particular, it naturally connects with
the analysis of master equations and with the probabilistic approach to mean
field control and mean field games; see, for instance,
\citeauthor*{cardaliaguet2013notes}~\cite{cardaliaguet2013notes} and
\citeauthor*{carmona2018probabilisticI}~\cite{carmona2018probabilisticI,carmona2018probabilisticII}.

\medskip
There is, however, an important distinction with the finite-dimensional case.
The preceding formulation only concerns the particular flow
$(\mu_t)_{0\le t\le T}$, namely the conditional law of a Brownian diffusion
with common noise. This is the analog of the driftless identity
\eqref{eq:martingale}. To obtain the counterpart of
\eqref{eq:martingale_drift}, one must understand the behavior of the potential
semigroup along the conditional law of a general diffusion,
\[
\mathrm{d}X_t
=
\beta(t,X_t,\nu_t)\,\mathrm{d}t
+
\mathrm{d}W_t+\sigma_\circ\,\mathrm{d}W^\circ_t,
\qquad
\nu_t:=\Lc(X_t\mid\Gc_t).
\]
In finite dimension, this passage is handled by Girsanov's theorem. In the
measure-valued setting, such a reduction does not capture the difficulty:
the state variable is the conditional law itself, and changing the drift changes
the full random flow of conditional distributions. The heat-semigroup effect
must therefore be studied directly along general admissible conditional law
flows.

\medskip
A natural candidate is obtained by propagating the initial law through a
conditional McKean--Vlasov flow. Let $B$ be a reference drift and let
$S^{t,U}$ solve
\[
S^{t,U}_r=U,\qquad r\in[0,t],
\]
and, for $r\in[t,T]$,
\[
\mathrm{d}S^{t,U}_r
=
B(r,S^{t,U}_r,\mu^{t,m}_r)\,\mathrm{d}r
+\mathrm{d}W_r+\sigma_\circ\,\mathrm{d}W^\circ_r,
\qquad
\mu^{t,m}_r:=\Lc(S^{t,U}_r\mid\Gc_r),
\]
where $U\sim m$ is independent of the noises. For a terminal functional
$G:\Pc(\R^d)\to\R$, define
\[
\Kc_BG(t,m)
:=
\E\bigl[G(\mu^{t,m}_T)\bigr].
\]

The dynamics considered here are closely related to the literature on nonlinear
Markov processes and McKean--Vlasov semigroups. The terminology goes back to
McKean's interpretation of nonlinear evolution equations through stochastic
processes whose coefficients depend on their own law. In this setting, the
flow $m\longmapsto \mu^{t,m}_T$
is often called the McKean--Vlasov semigroup or distribution semigroup. General
perspectives on nonlinear Markov semigroups can be found in
\citeauthor{kolokoltsov2006nonlinear} \cite{kolokoltsov2006nonlinear}, while second--order differential
properties of McKean--Vlasov semigroups are studied in
\citeauthor*{arnaudon2020second} \cite{arnaudon2020second}. See also \citeauthor*{buckdahn2014mean} \cite{buckdahn2014mean}, \citeauthor*{Frikha2022Well} \cite{Frikha2022Well}, $\cdots$ for the interpretation as solution of PDEs on the Wasserstein space. These questions are also
closely connected with propagation of chaos and the approximation of nonlinear
McKean--Vlasov flows by weakly interacting particle systems, see for instance
\citeauthor*{sznitman1991topics} \cite{sznitman1991topics}, \citeauthor*{meleard1996asymptotic} \cite{meleard1996asymptotic}.

\medskip
The present work adopts a complementary perspective, which consists of considering the semi--group as an operator taking a map as an input (as in the finite-dimensional case). Instead of focusing primarily
on stability estimates or higher--order differentiability properties of the
flow, we study its martingale representation property. More
precisely, we ask whether the semigroup generated by a reference
McKean--Vlasov dynamics can play, on the Wasserstein space, the same role as the
semigroup in finite dimension. In this sense, we shall view $\Kc_B$ as the McKean--Vlasov semigroup, or Wasserstein heat semigroup, associated with
the reference drift $B$. The main objective is to show that this operator
produces a first-order backward representation along general admissible
conditional law flows, without assuming a priori that this semigroup has a time derivative or second--order derivatives in the measure variable, in direct analogy with the heat--kernel representation in
the classical Markovian setting.
More precisely, let
$(\nu_t)_{0\le t\le T}$ be generated by a possibly different drift $\beta$:
\[
\mathrm{d}X_t
=
\beta(t,X_t,\nu_t)\,\mathrm{d}t
+
\mathrm{d}W_t+\sigma_\circ\,\mathrm{d}W^\circ_t,
\qquad
\nu_t:=\Lc(X_t\mid\Gc_t).
\]
Then, under suitable assumptions,
\[
\begin{aligned}
\Kc_BG(t,\nu_t)
&=
G(\nu_T)
+
\int_t^T
\E\Big[
\nabla_x\delta_m\Kc_BG(r,\nu_r)(X_r)
\cdot
\bigl(B(r,X_r,\nu_r)-\beta(r,X_r,\nu_r)\bigr)
\,\Big|\,\Gc_r
\Big]\mathrm{d}r
\\
&\quad
-
\int_t^T
\E\Big[
\nabla_x\delta_m\Kc_BG(r,\nu_r)(X_r)
\,\Big|\,\Gc_r
\Big]\cdot\sigma_\circ\,\mathrm{d}W^\circ_r.
\end{aligned}
\]
When $B=0$, this becomes
\[
\begin{aligned}
\Kc_0G(t,\nu_t)
=
G(\nu_T)
&-
\int_t^T
\E\Big[
\nabla_x\delta_m\Kc_0G(r,\nu_r)(X_r)
\cdot
\beta(r,X_r,\nu_r)
\,\Big|\,\Gc_r
\Big]\mathrm{d}r
\\
&-
\int_t^T
\E\Big[
\nabla_x\delta_m\Kc_0G(r,\nu_r)(X_r)
\,\Big|\,\Gc_r
\Big]\cdot\sigma_\circ\,\mathrm{d}W^\circ_r.
\end{aligned}
\]
This formula may be interpreted as a martingale representation theorem on the
space of probability measures. The conditional law is the state variable, the
common noise generates the martingale part, and the Lions derivative
$\nabla_x\delta_m\Kc_BG$ plays the role of the classical gradient.

\medskip
The comparison with the finite-dimensional formula is direct. The gradient
$\nabla_xu$ is replaced by $\nabla_x\delta_m\Kc_BG$. The Brownian martingale
term is replaced by a martingale driven by the common noise. The drift correction
$B-\beta$ measures the discrepancy between the reference McKean--Vlasov
dynamics used to define the semigroup and the admissible dynamics along which
the semigroup is evaluated.

\smallskip
There is nevertheless an important difference. In finite dimension, the heat
kernel can regularize very rough terminal functions. In the Wasserstein setting,
one cannot expect such a result without any regularity on
\[
m\longmapsto G(m).
\]
Some first-order differentiability of $G$ is needed. The regularization
effect remains substantial, however: the representation requires neither a time
derivative of the semigroup nor a second--order derivative in the measure
variable. Moreover, when $B=0$, the assumptions can be weakened further: the
Brownian conditional flow allows one to work with the linear functional
derivative $\delta_mG$, without imposing the terminal Lions derivative
$\nabla_x\delta_mG$.

\medskip
A further consequence of this dynamic viewpoint is that the framework can accommodate a discrete form of path dependence. More precisely, we consider terminal functionals depending on the conditional law at finitely many deterministic times, 
\[ G\bigl(\mu_{t_0},\ldots,\mu_{t_k}\bigr), \qquad 0=t_0<t_1<\cdots<t_k=T. 
\] 
Although such functionals are no longer Markovian in the current measure alone, the preceding representation can be applied recursively on each interval $[t_j,t_{j+1}]$. At every step, the previously observed values $\mu_{t_0},\ldots,\mu_{t_j}$ are frozen as parameters, while the current conditional law remains the Markovian state variable. A backward induction over the grid then yields a martingale representation for the resulting discrete path-dependent functional. Thus, discrete path dependence can be incorporated without developing a new representation theorem on the full space of measure-valued paths.

\medskip
\paragraph*{Strategy of the proof.}Our proof follows a route which is somewhat different from the usual approach
to McKean--Vlasov semigroups. Rather than deriving and using a differential
equation on the space of probability measures, which would require one time
derivative and second--order derivatives in the measure variable, we work
directly at the level of a finite-particle approximation. This allows us to
identify the first-order backward dynamics of the semigroup without ever
differentiating twice in the measure argument. Another important feature of the
argument is that it naturally incorporates common noise, a situation which is
less often treated in the existing literature on McKean--Vlasov semigroups. In
this sense, our approach is close in spirit to the particle-based analysis of
McKean--Vlasov semigroups, but it is adapted here to the conditional-law setting
induced by the common noise.
More precisely, at the particle level, we consider the map
\[
\xbb=(x_1,\dots,x_n)
\longmapsto
G\left(\frac1n\sum_{i=1}^n\delta_{X^{i,n,t,\xbb}_T}\right),
\]
where $(X^{1,n,t,\xbb},\dots,X^{n,n,t,\xbb})$ is the interacting particle
system starting from $\xbb$ at time $t$. Differentiating this finite-dimensional
map with respect to one coordinate $x_i$ produces two distinct contributions.

\smallskip
The first one is the diagonal contribution. It corresponds to the sensitivity of
particle $i$ with respect to its own initial condition $x_i$. This term is
of order one and converges to the usual tangent process along the limiting
McKean--Vlasov dynamics.
The second one is the off-diagonal contribution. It corresponds to the
sensitivity of a particle $j$, with $j\neq i$, with respect to the initial
condition $x_i$. This effect is transmitted only through the empirical
measure, and each individual off-diagonal derivative is therefore of order
$1/n$. However, since there are $n(n-1)$ such interactions, their cumulative
contribution is non-negligible and survives in the mean-field limit. The limit
of this contribution is precisely the additional term appearing in the Lions
derivative of the propagated functional.

\smallskip
This diagonal/off-diagonal decomposition is the main structural feature of the
proof. It explains why the Wasserstein semigroup differs from the classical
finite-dimensional heat semigroup: perturbing one initial particle not only
moves this particle, but also perturbs the empirical distribution, and this
perturbation propagates through the whole system. The finite-particle
approximation makes this mechanism explicit and provides a direct way to obtain
the desired first-order representation in the presence of common noise. The techniques developed in this paper are inspired by, and share several features with, those introduced in \citeauthor*{djete2025notionbsdewassersteinspace}~\cite{djete2025notionbsdewassersteinspace}.

\medskip
\paragraph*{Organization of the paper.} In \Cref{sec:notation}, we introduce the probabilistic framework, the notion of linear functional derivative, and the standing assumptions. In \Cref{sec:integral_formula}, we study the conditional McKean--Vlasov flow and its diagonal and off-diagonal tangent processes. Using a finite-particle approximation, we derive an integral formula for the variation of the propagated functional and identify its Lions derivative. In \Cref{sec:backward_flow}, we prove the main backward flow formula for the McKean--Vlasov semigroup along arbitrary admissible conditional law flows. We also discuss its interpretation as a nonlinear Markov semigroup and as a Feynman--Kac representation for linear parabolic equations on the space of probability measures. In \Cref{sec:martingale_representation}, we specialize to the driftless reference case and formulate an intrinsic martingale representation problem, whose unique solution is given by the McKean--Vlasov heat semigroup and its Lions derivative. Finally, \Cref{sec:mixed_formula} develops a mixed finite-dimensional/Wasserstein backward formula, while \Cref{sec:path_dependent} shows how discrete path dependence can be incorporated by iterating the Markovian representation backward along a deterministic time grid.

\section{Notation and assumptions} \label{sec:notation}

Let $T>0$, $d\ge1$, and let $(\Omega,\Fc,\F,\P)$ be a filtered probability
space supporting two $\R^d$--valued independent Brownian motions:
\[
W=(W_t)_{0\le t\le T},
\qquad
W^\circ=(W^\circ_t)_{0\le t\le T}.
\]
The process $W$ is the idiosyncratic noise and $W^\circ$ is the common noise.
We fix a matrix $\sigma_\circ\in\R^{d\times d}$ and set
\[
\Gc_t:=\sigma\{\sigma_\circ W^\circ_r:\ 0\le r\le t\}.
\]
We write $\Pc(\R^d)$ for the set of probability measures on $\R^d$ equipped with the weak convergence topology.

\medskip
We use the following convention for derivatives in the measure variable. A map
$F:\Pc(\R^d)\to\R$ admits a linear functional derivative if there exists a Borel map
$\delta_mF:\Pc(\R^d)\times\R^d\to\R$ such that, for every
$m,m'\in\Pc(\R^d)$ such that $\int_0^1
\int_{\R^d}
|\delta_mF\bigl((1-\theta)m+\theta m'\bigr)(x)|
\,(m'+m)(\mathrm{d}x)\,\mathrm{d}\theta< \infty$, we have
\begin{align} \label{eq:def_linear_func}
F(m')-F(m)
=
\int_0^1
\int_{\R^d}
\delta_mF\bigl((1-\theta)m+\theta m'\bigr)(x)
\,(m'-m)(\mathrm{d}x)\,\mathrm{d}\theta .
\end{align}
The derivative $\delta_mF$ is defined up to an additive constant. 
Whenever $x\mapsto\delta_mF(m)(x)$ is differentiable, we denote by
$\nabla_x\delta_mF(m)(x)$ its spatial derivative. This is the Lions derivative
of $F$, up to the usual identification. When $m \longmapsto \delta_mF(m)(x)$ itself admits a linear functional derivative,
we denote it by $\delta^2_mF(m)(x,y)$, and we write $\nabla_y\nabla_x\delta^2_mF(m)(x,y)$
for its mixed spatial gradient whenever it exists.

For a map
\[
B:[0,T]\times\R^d\times\Pc(\R^d)\to\R^d,
\]
we write
\[
\nabla_xB(t,x,m)\in\R^{d\times d},
\]
and
\[
\delta_mB(t,x,m)(y)\in\R^d,
\qquad
\nabla_y\delta_mB(t,x,m)(y)\in\R^{d\times d}.
\]

\begin{assumption}
\label{assum:main}
The drift $B$ is bounded and continuous in $(x,m)$ for each $t$. It is differentiable in the space
variable and admits a linear functional derivative in the measure variable.
Moreover,
\[
\nabla_xB,
\qquad
\delta_mB,
\qquad
\nabla_y\delta_mB
\]
are bounded and continuous in $(x,m,y)$ for each $t$.
\end{assumption}

\begin{assumption}
\label{assum:G}
The terminal functional $G:\Pc(\R^d)\to\R$ is bounded and continuous. It
admits a linear functional derivative whose spatial gradient
\[
\nabla_x\delta_mG(m)(x)
\]
is bounded and continuous.
\end{assumption}

The assumptions above are deliberately stated in a smooth regime. They are not
intended to be optimal. Their role is to justify the differentiations and the
passage from finite-particle systems to conditional McKean--Vlasov limits. One
of the motivations of the paper is precisely that, once the semigroup is
constructed, the backward formula only uses first-order derivatives.

\section{Integral formula along the McKean--Vlasov flow} \label{sec:integral_formula}

\paragraph*{The conditional McKean--Vlasov flow}

Let $0\le t\le s\le T$ and $m\in\Pc(\R^d)$. Let $U$ be an
$\R^d$--valued random variable with law $m$, independent of $W$ and
$W^\circ$. We denote by $S^{t,m}$ the solution of
\[
S^{t,m}_r=U,\qquad r\in[0,t],
\]
and, for $r\in[t,T]$,
\[
\mathrm{d}S^{t,m}_r
=
B(r,S^{t,m}_r,\mu^{t,m}_r)\,\mathrm{d}r
+\mathrm{d}W_r+\sigma_\circ\,\mathrm{d}W^\circ_r,\qquad \mu^{t,m}_r:=\Lc(S^{t,m}_r\mid\Gc_r).
\]
Since $S^{t,m}_r$ only depends on the common noise up to time $r$, one also
has
\[
\Lc(S^{t,m}_r\mid\Gc_T)=\Lc(S^{t,m}_r\mid\Gc_r).
\]
We shall freely use either conditioning. Under our standing assumptions, the well--posedness (i.e., existence and uniqueness) of the conditional
McKean--Vlasov equation defining $S^{t,m}$ follows from \citeauthor*{djete2019mckean} 
\cite[Theorem A.3]{djete2019mckean}.
For a terminal functional $G$, define
\[
\Gr(s,t,m):=G(\mu^{t,m}_s),
\]
which is in general $\Gc_s$--measurable. We also define its averaged version
\[
\overline\Gr(s,t,m):=\E\bigl[G(\mu^{t,m}_s)\bigr].
\]

The object $\overline\Gr$ is deterministic. In particular, the McKean--Vlasov
semigroup associated with the terminal time $T$ is
\[
\Kc_BG(t,m):=\overline\Gr(T,t,m)=\E\bigl[G(\mu^{t,m}_T)\bigr].
\]

\paragraph*{Tangent processes}

The derivative of the McKean--Vlasov flow with respect to the initial law has
two components. The first component is diagonal and corresponds to the
variation of a particle with respect to its own initial condition. The second is
off-diagonal and corresponds to the limiting effect of perturbing one particle
on another particle through the empirical measure.
Let $m,m'\in\Pc(\R^d)$, and let $(U,U')$ be a coupling of $(m,m')$,
independent of the noises. For $\theta\in[0,1]$, define
\[
U^\theta:=U'+\theta(U-U'),
\qquad
\eta^\theta:=\Lc(U^\theta).
\]
Let $S^\theta=S^{t,\eta^\theta}$, and write
\[
\mu^\theta_r:=\Lc(S^\theta_r\mid\Gc_r).
\]

The diagonal tangent process $J^\theta$ is defined by
\[
J^\theta_r=\Ir_d,\qquad r\in[0,t],
\]
and, for $r\in[t,T]$,
\[
\mathrm{d}J^\theta_r
=
\nabla_xB(r,S^\theta_r,\mu^\theta_r)J^\theta_r\,\mathrm{d}r.
\]
With the well--posedness of $S^\theta$, the well--posedness of the couple $(S^\theta, J^\theta)$ follows directly.
Let $\overline S^\theta$ be a conditionally independent copy of $S^\theta$
given the common noise. Thus
\[
\overline S^\theta_r=\overline U^\theta,\qquad r\in[0,t],
\]
and
\[
\mathrm{d}\overline S^\theta_r
=
B(r,\overline S^\theta_r,\mu^\theta_r)\,\mathrm{d}r
+\mathrm{d}\overline W_r+\sigma_\circ\,\mathrm{d}W^\circ_r.
\]
The off-diagonal tangent process $\overline J^\theta$ starts from zero:
\[
\overline J^\theta_r=0,\qquad r\in[0,t].
\]
For $r\in[t,T]$, it solves
\begin{align} \label{eq:off_diagonal}
\mathrm{d}\overline J^\theta_r
&=
\nabla_xB(r,\overline S^\theta_r,\mu^\theta_r)
\overline J^\theta_r\,\mathrm{d}r
+
\nabla_y\delta_mB(r,\overline S^\theta_r,\mu^\theta_r)(S^\theta_r)
J^\theta_r\,\mathrm{d}r
+
\int_{\R^d\times\R^{d\times d}}
\nabla_y\delta_mB(r,\overline S^\theta_r,\mu^\theta_r)(x)j\,
\widehat\mu^\theta_r(\mathrm{d}x,\mathrm{d}j)\,\mathrm{d}r,
\end{align}
where
\[
\widehat\mu^\theta_r
:=
\Lc(\overline S^\theta_r,\overline J^\theta_r\mid S^\theta,J^\theta,\Gc_r).
\]

The second term represents the direct effect of the tagged perturbation
$(S^\theta,J^\theta)$ on the copy $\overline S^\theta$. The third term
represents the average effect of the perturbation after it has propagated
through the population.
 Using the well--posedness of $\left(S^\theta,J^\theta,\overline S^\theta \right)$, another application of \cite[Theorem A.3]{djete2019mckean} provides the well--posedness of $\left(S^\theta,J^\theta,\overline S^\theta, \overline J^\theta \right)$.

\begin{proposition}[Integral formula along a McKean--Vlasov flow]
\label{prop:integral_formula_flow}
Assume {\rm \Cref{assum:main,assum:G}}. Let $m,m'\in\Pc(\R^d)$, and let
$(U,U')$ be a coupling of $(m,m')$, independent of the noises. Then, for
every $s\in[t,T]$, $\P$-a.s.,
\[
\begin{aligned}
G(\mu^{t,m}_s)-G(\mu^{t,m'}_s)
&=
\int_0^1
\E\Big[
\nabla_x\delta_mG(\mu^\theta_s)(S^\theta_s)
\cdot J^\theta_s(U-U')
\,\Big|\,\Gc_s
\Big]\,\mathrm{d}\theta
+
\int_0^1
\E\Big[
\nabla_x\delta_mG(\mu^\theta_s)(\overline S^\theta_s)
\cdot \overline J^\theta_s(U-U')
\,\Big|\,\Gc_s
\Big]\,\mathrm{d}\theta.
\end{aligned}
\]
\end{proposition}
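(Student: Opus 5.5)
We follow the route announced in the introduction and work at the level of a finite-particle approximation. Fix $n\ge1$ and let $(U^i,U'^i)_{i\ge1}$ be i.i.d.\ copies of $(U,U')$, with idiosyncratic Brownian motions $(W^i)_{i\ge1}$ independent of each other and of $W^\circ$. For $\theta\in[0,1]$ set $U^{i,\theta}:=U'^i+\theta(U^i-U'^i)$. Let $\Xbb^{n,\theta}=(X^{1,n,\theta},\dots,X^{n,n,\theta})$ be the interacting system started at time $t$ from $(U^{1,\theta},\dots,U^{n,\theta})$:
\[
\mathrm{d}X^{i,n,\theta}_r=B\bigl(r,X^{i,n,\theta}_r,\mu^{n,\theta}_r\bigr)\mathrm{d}r+\mathrm{d}W^i_r+\sigma_\circ\mathrm{d}W^\circ_r,\qquad \mu^{n,\theta}_r:=\tfrac1n\textstyle\sum_{j}\delta_{X^{j,n,\theta}_r}.
\]
Under \Cref{assum:main} the map $\xbb\mapsto \Xbb^{n,t,\xbb}_s$ is $C^1$. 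Its Jacobian $\partial_{x_i}X^{j,n}$ solves a linear system whose coefficients are $\nabla_xB$ and $\frac1n\nabla_y\delta_mB$, since $\partial_{y_k}$ of a function of $\frac1n\sum\delta_{y_\ell}$ equals $\frac1n\nabla_y\delta_m$. Likewise, $\xbb\mapsto G(\frac1n\sum\delta_{x_j})$ has $x_j$-gradient $\frac1n\nabla_x\delta_mG(\cdot)(x_j)$ by \Cref{assum:G}. The fundamental theorem of calculus in $\theta$ then gives, pathwise,
\[
G(\mu^{n,1}_s)-G(\mu^{n,0}_s)=\int_0^1\frac1n\sum_{i,j}\nabla_x\delta_mG(\mu^{n,\theta}_s)(X^{j,n,\theta}_s)\cdot\partial_{x_i}X^{j,n,\theta}_s\,(U^i-U'^i)\,\mathrm{d}\theta .
\]
We then split this sum into its diagonal part ($j=i$) and its off-diagonal part ($j\neq i$).

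Next we identify the limits of the tangent processes. The diagonal derivative $J^{i,n}:=\partial_{x_i}X^{i,n}$ solves $\mathrm{d}J^{i,n}=\nabla_xB\,J^{i,n}\mathrm{d}r+O(1/n)$. For the off-diagonal derivatives, set $\overline J^{j,i,n}:=n\,\partial_{x_i}X^{j,n}$ for $j\neq i$; it satisfies
\[
\mathrm{d}\overline J^{j,i,n}=\Bigl[\nabla_xB(X^j)\overline J^{j,i,n}+\nabla_y\delta_mB(X^j,\mu^n)(X^i)J^{i,n}+\tfrac1n\textstyle\sum_{k\ne i}\nabla_y\delta_mB(X^j,\mu^n)(X^k)\overline J^{k,i,n}\Bigr]\mathrm{d}r,
\]
which is the particle analogue of \eqref{eq:off_diagonal}. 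Grönwall's lemma and boundedness of the coefficients give uniform bounds: $|J^{i,n}|\le C$ and $|\overline J^{j,i,n}|\le C$. The off-diagonal part of the sum can then be written as $\frac1n\sum_i\frac1n\sum_{j\ne i}\nabla_x\delta_mG(\mu^n)(X^j)\cdot\overline J^{j,i,n}(U^i-U'^i)$.

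We then pass to the limit $n\to\infty$ by conditional propagation of chaos given $W^\circ$, in the spirit of \cite{djete2019mckean}. Take $(X^{1},J^{1})$ as tagged particle and $(X^j,\overline J^{j,1,n})$ as the population viewed from particle $1$. By exchangeability, the expectation against any bounded $\Gc_s$-measurable test variable $Z$ of the diagonal term becomes $\E[Z\,\nabla_x\delta_mG(\mu^\theta_s)(S^\theta_s)\cdot J^\theta_s(U-U')]$, with $\mu^{n,\theta}\to\mu^\theta$. The off-diagonal term converges to the corresponding expression with the pair $(\overline S^\theta,\overline J^\theta)$, where $\overline J^\theta$ solves \eqref{eq:off_diagonal}. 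The left-hand side converges to $G(\mu^{t,m}_s)-G(\mu^{t,m'}_s)$ by continuity of $G$, using that $\eta^1=m$ and $\eta^0=m'$. The integrability of $|U-U'|$ is not needed: we first treat couplings with bounded $|U-U'|$ and then truncate, using dominated convergence in the conditional expectations. Since $Z$ is arbitrary, the formula holds as an identity of $\Gc_s$-conditional expectations, $\P$-a.s.

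The main obstacle is the convergence of the off-diagonal system. We must show that the empirical law of $(X^j,\overline J^{j,1,n})_{j\ge2}$, conditionally on the tagged particle $(X^1,J^1)$ and on $W^\circ$, converges to $\widehat\mu^\theta_r=\Lc(\overline S^\theta_r,\overline J^\theta_r\mid S^\theta,J^\theta,\Gc_r)$. We would handle this by a synchronous coupling argument. Couple each $X^j$ with the McKean--Vlasov copy $\overline S^{j}$ driven by $W^j$, and each $\overline J^{j,1,n}$ with the solution $\overline J^{j}$ of \eqref{eq:off_diagonal} driven by the same tagged pair. Grönwall's lemma then controls the error using the Lipschitz continuity in $m$ that follows from boundedness of $\nabla_y\delta_mB$. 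The remaining $O(n^{-1/2})$ law-of-large-numbers errors are controlled conditionally on $(U^1,W^1,W^\circ)$. Continuity of $\nabla_y\delta_mB$ and of $\nabla_x\delta_mG$ in $(x,m,y)$ is what allows passing to the limit inside the nonlinear coefficients, and uniqueness for \eqref{eq:off_diagonal} identifies the limit.
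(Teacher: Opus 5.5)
Your proof is correct and follows the paper's overall architecture. The shared steps are: the $n$-particle system started from i.i.d.\ copies of $U^\theta$, the fundamental theorem of calculus in $\theta$, and the split $j=i$ versus $j\neq i$ with the rescaling $nJ^{j,i,n}$ (your equation for $\overline J^{j,i,n}$ is the paper's equation for $\Gamma^{j,i,n}$). You also share the Gr\"onwall bounds (your pathwise bounds are stronger than the $L^p$ estimates of Step~4), the reduction to one tagged particle by exchangeability, and testing against bounded $\Gc_s$-measurable variables.

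Where you genuinely differ is the convergence of the off-diagonal system. The paper proves tightness of $\Q^n$, identifies every limit point through \eqref{eq:off_diagonal}, and concludes by uniqueness of that linear equation. You instead construct the limiting copies $(\overline S^j,\overline J^j)_{j\ge2}$ on the particle space, driven by $(U^j,U'^j,W^j,W^\circ)$ and the limiting tagged pair, and estimate $\overline J^{j,1,n}-\overline J^j$ directly.

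Your coupling gives $L^1$ convergence on a single probability space. Testing against an arbitrary bounded $\Gc_s$-measurable variable is then immediate, whereas weak convergence of $\Q^n$ only tests continuous functionals of $W^\circ$ directly. The identification of the limit, which the paper only sketches, also becomes an explicit estimate. The price is checking that the limiting copies are conditionally i.i.d.\ given $(U^1,U'^1,W^1,W^\circ)$. This holds because each copy is the same measurable functional of its own data and of the tagged data.

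Since $\nabla_xB$ and $\nabla_y\delta_mB$ are only continuous, the Gr\"onwall step for the tangent processes cannot use Lipschitz bounds on these coefficients. It must treat the coefficient mismatches as source terms: a bounded tangent times a difference that vanishes by continuity and dominated convergence. Your last sentence amounts to this, and the compactness route likewise needs only continuity.

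Two side remarks should be corrected.

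\begin{enumerate}
\item The $O(n^{-1/2})$ rate holds only for terms linear in the empirical measure, such as the mean-field term of the off-diagonal equation. Comparing $B$ at the empirical measure of the limiting copies with $B(\cdot,\mu^\theta_r)$ gives only $o(1)$ under first-order assumptions. That is enough for your argument.
\item The integrability $\E|U-U'|<\infty$ cannot be dispensed with. Without it, the conditional expectations on the right-hand side are undefined, and your truncation step has no dominating function. Under this integrability, no truncation is needed at all, since $U^1-U'^1$ has a law independent of $n$ and dominated convergence applies directly.
\end{enumerate}
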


\begin{proof}
The proof is based on a finite-particle approximation. The purpose of the
argument is to identify the derivative of
\[
m\longmapsto G(\mu^{t,m}_s)
\]
without differentiating a nonlinear stochastic flow directly on
$\Pc(\R^d)$. We approximate the flow by an interacting particle system,
differentiate the finite-dimensional map, and then pass to the limit.

\medskip

\noindent
\textbf{Step 1. finite-particle approximation.}

Let
\[
\xi=(x_1,\dots,x_n)\in(\R^d)^n.
\]
We consider the $n$-particle system
\[
X^{i,n,\xi}_r=x_i,\qquad r\in[0,t],
\]
and, for $r\in[t,T]$,
\[
\mathrm{d}X^{i,n,\xi}_r
=
B(r,X^{i,n,\xi}_r,\mu^{n,\xi}_r)\,\mathrm{d}r
+\mathrm{d}W^i_r+\sigma_\circ\,\mathrm{d}W^\circ_r,
\]
where
\[
\mu^{n,\xi}_r:=\frac1n\sum_{k=1}^n\delta_{X^{k,n,\xi}_r}.
\]
For fixed $s\in[t,T]$, set
\[
G^n_s(\xi):=G(\mu^{n,\xi}_s).
\]

Let
\[
\xi'=(x'_1,\dots,x'_n),
\qquad
\xi^\theta:=\xi'+\theta(\xi-\xi'),
\qquad \theta\in[0,1].
\]
Since $G^n_s$ is a smooth map on $(\R^d)^n$, the fundamental theorem of
calculus gives
\[
G^n_s(\xi)-G^n_s(\xi')
=
\int_0^1
\sum_{i=1}^n
D_{x_i}G^n_s(\xi^\theta)\cdot(x_i-x'_i)\,\mathrm{d}\theta.
\]
The problem is therefore reduced to computing $D_{x_i}G^n_s$.

\medskip

\noindent
\textbf{Step 2. Differentiating the particle system.}

For $1\le i,j\le n$, define
\[
J^{j,i,n,\xi}_r:=D_{x_i}X^{j,n,\xi}_r.
\]
This is a $d\times d$-valued process. It measures how particle $j$ reacts to
a perturbation of the initial condition of particle $i$. For $r\le t$,
\[
J^{j,i,n,\xi}_r=\1_{\{i=j\}}\Ir_d.
\]
For $r\in[t,T]$, differentiating the particle dynamics yields
\[
\begin{aligned}
\mathrm{d}J^{j,i,n,\xi}_r
&=
\nabla_xB(r,X^{j,n,\xi}_r,\mu^{n,\xi}_r)
J^{j,i,n,\xi}_r\,\mathrm{d}r
\\
&\quad+
\frac1n\sum_{k=1}^n
\nabla_y\delta_mB(r,X^{j,n,\xi}_r,\mu^{n,\xi}_r)(X^{k,n,\xi}_r)
J^{k,i,n,\xi}_r\,\mathrm{d}r.
\end{aligned}
\]
The factor $1/n$ comes from the empirical measure. It is the source of the
off-diagonal scaling below.

By the chain rule for functions of empirical measures,
\[
D_{x_i}G^n_s(\xi)
=
\frac1n\sum_{j=1}^n
\bigl(J^{j,i,n,\xi}_s\bigr)^\top
\nabla_x\delta_mG(\mu^{n,\xi}_s)(X^{j,n,\xi}_s).
\]
Thus, for any $h_i\in\R^d$,
\[
D_{x_i}G^n_s(\xi)\cdot h_i
=
\frac1n\sum_{j=1}^n
\nabla_x\delta_mG(\mu^{n,\xi}_s)(X^{j,n,\xi}_s)
\cdot J^{j,i,n,\xi}_s h_i.
\]

We now separate the term $j=i$ from the terms $j\neq i$. Applying the
previous identity with $h_i=x_i-x'_i$, we obtain
\[
\begin{aligned}
G^n_s(\xi)-G^n_s(\xi')
&=
\int_0^1
\frac1n\sum_{i=1}^n
\nabla_x\delta_mG(\mu^{n,\xi^\theta}_s)(X^{i,n,\xi^\theta}_s)
\cdot
J^{i,i,n,\xi^\theta}_s(x_i-x'_i)
\,\mathrm{d}\theta
\\
&\quad+
\int_0^1
\frac1{n^2}\sum_{i=1}^n\sum_{j\neq i}
\nabla_x\delta_mG(\mu^{n,\xi^\theta}_s)(X^{j,n,\xi^\theta}_s)
\cdot
\bigl(nJ^{j,i,n,\xi^\theta}_s(x_i-x'_i)\bigr)
\,\mathrm{d}\theta.
\end{aligned}
\]
This is the key decomposition. The first line is the diagonal contribution.
The second line is the off-diagonal contribution. For $j\neq i$, the process
$J^{j,i,n}$ is of order $1/n$, but there are $n(n-1)$ such terms. Hence
the rescaled quantity $nJ^{j,i,n}$ has a non-trivial limit.

\medskip

\noindent
\textbf{Step 3. Randomizing the initial condition.}

Let $(U^i,U^{\prime i})_{i\ge1}$ be i.i.d. copies of $(U,U')$, independent
of all Brownian motions. Define
\[
U^{\theta,i}:=U^{\prime i}+\theta(U^i-U^{\prime i}),
\qquad
\Ubb^{\theta,n}:=(U^{\theta,1},\dots,U^{\theta,n}).
\]
We apply the previous identity with
\[
\xi=(U^1,\dots,U^n),
\qquad
\xi'=(U^{\prime1},\dots,U^{\prime n}).
\]
Conditional propagation of chaos gives, for each $r\in[t,T]$, 
\[
\mu^{n,\Ubb^{\theta,n}}_r
\Longrightarrow
\mu^\theta_r
=
\Lc(S^\theta_r\mid\Gc_r),
\]
where the convergence holds in the weak sense and holds in probability, conditionally on the common noise.

\medskip

\noindent
\textbf{Step 4. Uniform estimates for tangent processes.}

The tangent processes satisfy the estimate
\[
\sup_{n\ge1}\sup_{1\le i\le n}
\frac1n\sum_{j=1}^n
\E\left[
\sup_{r\in[t,T]}
|J^{j,i,n,\Ubb^{\theta,n}}_r|^p
\left(\1_{\{j=i\}}+n^p\1_{\{j\neq i\}}\right)
\right]
<\infty,
\]
for every $p\ge1$.

Indeed, $J^{i,i,n}$ starts from $\Ir_d$, and therefore remains of order one by
Gronwall's lemma. On the other hand, $J^{j,i,n}$, $j\neq i$, starts from
zero and is created only through the empirical-measure derivative, which carries
a factor $1/n$. Hence $J^{j,i,n}$ is of order $1/n$.

A useful consequence is
\[
\lim_{n\to\infty}
\E\left[
\int_t^T
\left|
\frac1n\sum_{k=1}^n
\nabla_y\delta_mB(r,X^{i,n}_r,\mu^n_r)(X^{k,n}_r)
J^{k,i,n}_r
\right|
\,\mathrm{d}r
\right]
=0.
\]
The term $k=i$ is multiplied by $1/n$, whereas the terms $k\neq i$ contain
$J^{k,i,n}=O(1/n)$.

\medskip

\noindent
\textbf{Step 5. Limit of the diagonal contribution.}

The diagonal tangent process satisfies
\[
\begin{aligned}
\mathrm{d}J^{i,i,n}_r
&=
\nabla_xB(r,X^{i,n}_r,\mu^n_r)J^{i,i,n}_r\,\mathrm{d}r
\\
&\quad+
\frac1n\sum_{k=1}^n
\nabla_y\delta_mB(r,X^{i,n}_r,\mu^n_r)(X^{k,n}_r)
J^{k,i,n}_r\,\mathrm{d}r.
\end{aligned}
\]
By the previous estimate, the second line vanishes in the limit. Therefore, by conditional propagation of chaos, see for instance \cite[Proposition 4.15]{djete2019general}, we obtain, for the weak topology, 
\[ 
    \frac1n\sum_{i=1}^n \delta_{\left(J^{i,i,n},\,X^{i,n},\,U^i,\,U^{\prime i},\,\mu^n\right)} \;\xrightarrow[n\to\infty]{}\; \Lc\left(J^\theta,S^\theta,U,U',\mu^\theta\,\middle|\,\Gc_T\right), \qquad \text{a.s.}
\]
Here $J^\theta$ is the diagonal tangent process associated with the limiting conditional McKean--Vlasov flow. It is initialized by \[ J^\theta_r=\Ir_d,\qquad r\in[0,t], \] and solves, for $r\in[t,T]$, 
\[ 
    \mathrm{d}J^\theta_r = \nabla_xB(r,S^\theta_r,\mu^\theta_r)J^\theta_r\,\mathrm{d}r .
\]
Consequently, for every bounded $\Gc_s$-measurable random variable $H$,
\[
\begin{aligned}
&\lim_{n\to\infty}
\E\bigg[
H
\int_0^1
\frac1n\sum_{i=1}^n
\nabla_x\delta_mG(\mu^{n,\Ubb^{\theta,n}}_s)(X^{i,n,\Ubb^{\theta,n}}_s)
\cdot
J^{i,i,n,\Ubb^{\theta,n}}_s(U^i-U^{\prime i})
\,\mathrm{d}\theta
\bigg]
\\
&\qquad =
\E\bigg[
H
\int_0^1
\E\Big[
\nabla_x\delta_mG(\mu^\theta_s)(S^\theta_s)
\cdot
J^\theta_s(U-U')
\,\Big|\,\Gc_s
\Big]\mathrm{d}\theta
\bigg].
\end{aligned}
\]

\medskip

\noindent
\textbf{Step 6. Limit of the off-diagonal contribution.}

For $j\neq i$, set
\[
\Gamma^{j,i,n}_r:=nJ^{j,i,n}_r.
\]
Then $\Gamma^{j,i,n}_r=0$ for $r\le t$, and
\[
\begin{aligned}
\mathrm{d}\Gamma^{j,i,n}_r
&=
\nabla_xB(r,X^{j,n}_r,\mu^n_r)\Gamma^{j,i,n}_r\,\mathrm{d}r
\\
&\quad+
\nabla_y\delta_mB(r,X^{j,n}_r,\mu^n_r)(X^{i,n}_r)
J^{i,i,n}_r\,\mathrm{d}r
\\
&\quad+
\frac1n\sum_{k\neq i}
\nabla_y\delta_mB(r,X^{j,n}_r,\mu^n_r)(X^{k,n}_r)
\Gamma^{k,i,n}_r\,\mathrm{d}r.
\end{aligned}
\]
The first line is the transport of the perturbation along particle $j$. The
second line is the direct effect of the tagged particle $i$. The third line is
the average propagation through the rest of the population.
The estimates of Step~4 imply the tightness of the sequence of laws
$(\Q^n)_{n\ge 1}$, where
\[
\begin{aligned}
\Q^n
&:=
\frac1n\sum_{i=1}^n
\Lc\left(
X^{i,n},J^{i,i,n},U^i,U^{\prime i},
\widehat\mu^{i,n},\mu^n,W^\circ
\right)
\\
&=
\Lc\left(
X^{1,n},J^{1,1,n},U^1,U^{\prime 1},
\widehat\mu^{1,n},\mu^n,W^\circ
\right),
\end{aligned}
\]
with
\[
\widehat\mu^{i,n}
:=
\frac1{n-1}\sum_{j\neq i}
\delta_{\left(X^{j,n},\Gamma^{j,i,n}\right)} .
\]
Here $\Gamma^{j,i,n}=nJ^{j,i,n}$ denotes the rescaled off-diagonal tangent
process.

Using the propagation of chaos established in Step~5, together with the
uniform estimates on the rescaled off-diagonal tangent processes, one identifies
any limit point of $(\Q^n)_{n\ge1}$ as
\[
\Lc\left(
S^\theta,J^\theta,U,U',
\widehat\mu^\theta,\mu^\theta,W^\circ
\right),
\]
where
\[
\widehat\mu^\theta
:=
\Lc\left(
\overline S^\theta,\overline J^\theta
\,\middle|\,
S^\theta,J^\theta,U,U',\Gc_T
\right).
\]
Equivalently,
\[
\begin{aligned}
\Lc\left(
X^{1,n},J^{1,1,n},U^1,U^{\prime 1},
\widehat\mu^{1,n},\mu^n,W^\circ
\right)
\Longrightarrow
\Lc\left(
S^\theta,J^\theta,U,U',
\widehat\mu^\theta,\mu^\theta,W^\circ
\right).
\end{aligned}
\]
In this limiting system, $\overline S^\theta$ is conditionally independent of
$S^\theta$ given the common noise, and $\overline J^\theta$ is the solution
of the off-diagonal linear McKean--Vlasov equation
\eqref{eq:off_diagonal}. Since this linear equation is well posed, the limit
point is unique. Consequently, the whole sequence $(\Q^n)_{n\ge1}$ converges.
Therefore, for every bounded $\Gc_s$-measurable random variable $H$,
\[
\begin{aligned}
&\lim_{n\to\infty}
\E\bigg[
H
\int_0^1
\frac1{n^2}\sum_{i=1}^n\sum_{j\neq i}
\nabla_x\delta_mG(\mu^{n,\Ubb^{\theta,n}}_s)(X^{j,n,\Ubb^{\theta,n}}_s)
\cdot
\Gamma^{j,i,n}_s(U^i-U^{\prime i})
\,\mathrm{d}\theta
\bigg]
\\
&\qquad =
\E\bigg[
H
\int_0^1
\E\Big[
\nabla_x\delta_mG(\mu^\theta_s)(\overline S^\theta_s)
\cdot
\overline J^\theta_s(U-U')
\,\Big|\,\Gc_s
\Big]\mathrm{d}\theta
\bigg].
\end{aligned}
\]

\medskip

\noindent
\textbf{Step 7. Conclusion.}

By conditional propagation of chaos,
\[
G\left(\mu^{n,(U^1,\dots,U^n)}_s\right)
\longrightarrow
G(\mu^{t,m}_s),
\qquad
G\left(\mu^{n,(U^{\prime1},\dots,U^{\prime n})}_s\right)
\longrightarrow
G(\mu^{t,m'}_s),
\]
in probability, and in $L^1$ under the boundedness assumptions.

Combining the diagonal and off-diagonal limits yields, for every bounded
$\Gc_s$-measurable random variable $H$,
\[
\begin{aligned}
\E\Big[H\bigl(G(\mu^{t,m}_s)-G(\mu^{t,m'}_s)\bigr)\Big]
&=
\E\bigg[
H
\int_0^1
\E\Big[
\nabla_x\delta_mG(\mu^\theta_s)(S^\theta_s)
\cdot J^\theta_s(U-U')
\,\Big|\,\Gc_s
\Big]\mathrm{d}\theta
\bigg]
\\
&\quad+
\E\bigg[
H
\int_0^1
\E\Big[
\nabla_x\delta_mG(\mu^\theta_s)(\overline S^\theta_s)
\cdot \overline J^\theta_s(U-U')
\,\Big|\,\Gc_s
\Big]\mathrm{d}\theta
\bigg].
\end{aligned}
\]
Since the identity holds for every bounded $\Gc_s$-measurable $H$, the
conditional identity follows.
\end{proof}

\paragraph*{Lions derivative of the propagated functional}

The previous proposition immediately identifies the Lions derivative of the
propagated functional
\[
m\longmapsto G(\mu^{t,m}_s).
\]
The formula has two terms. The first is the direct derivative along the tagged
particle. The second is the off-diagonal derivative, which accounts for the
variation of the surrounding conditional law.

\begin{corollary}[Derivative of the propagated functional]
\label{cor:derivative_propagated}
Assume {\rm \Cref{assum:main,assum:G}}. Then, for every $0\le t\le s\le T$, the map
\[
m\longmapsto \Gr(s,t,m)=G(\mu^{t,m}_s)
\]
admits a differentiable linear functional derivative. More precisely, for
$m\in\Pc(\R^d)$ and $x\in\R^d$,
\[
\begin{aligned}
\nabla_x\delta_m\Gr(s,t,m)(x)
&=
\E\Big[
\bigl(J^{t,m}_s\bigr)^\top
\nabla_x\delta_mG(\mu^{t,m}_s)(S^{t,m}_s)
\,\Big|\,\Gc_s, U=x
\Big]
\\
&\quad+
\E\Big[
\bigl(\overline J^{t,m}_s\bigr)^\top
\nabla_x\delta_mG(\mu^{t,m}_s)(\overline S^{t,m}_s)
\,\Big|\,\Gc_s, U=x
\Big].
\end{aligned}
\]
Equivalently, for every $h\in\R^d$,
\[
\begin{aligned}
\nabla_x\delta_m\Gr(s,t,m)(x)\cdot h
&=
\E\Big[
\nabla_x\delta_mG(\mu^{t,m}_s)(S^{t,m}_s)
\cdot J^{t,m}_s h
\,\Big|\,\Gc_s,\ U=x
\Big]
\\
&\quad+
\E\Big[
\nabla_x\delta_mG(\mu^{t,m}_s)(\overline S^{t,m}_s)
\cdot \overline J^{t,m}_s h
\,\Big|\,\Gc_s,\ U=x
\Big].
\end{aligned}
\]
\end{corollary}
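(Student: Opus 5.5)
We would derive the corollary directly from \Cref{prop:integral_formula_flow} by rewriting its right-hand side in the form required by the definition \eqref{eq:def_linear_func}, with the candidate derivative
\[
\Vc(m)(x):=\E\Big[\bigl(J^{t,m}_s\bigr)^\top\nabla_x\delta_mG(\mu^{t,m}_s)(S^{t,m}_s)+\bigl(\overline J^{t,m}_s\bigr)^\top\nabla_x\delta_mG(\mu^{t,m}_s)(\overline S^{t,m}_s)\,\Big|\,\Gc_s,\ U=x\Big].
\]
The conditioning on $\{U=x\}$ is meaningful because $U$ is independent of the noises. Moreover, by pathwise uniqueness the pair $(S^{t,m},J^{t,m})$ is a measurable functional of $(U,W,W^\circ)$. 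Similarly, $(\overline S^{t,m},\overline J^{t,m})$ is a functional of $(\overline U,\overline W,W^\circ)$ and of the tagged path. Hence $\Vc(m)(x)$ is a well-defined $\Gc_s$-measurable version, obtained by freezing $U=x$ and integrating out $W,\overline U,\overline W$.

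\textbf{Step 1 (reduction to $\theta$-flows).} Fix $m,m'$ and take the independent coupling $U\perp U'$. In that case $U^\theta=U'+\theta(U-U')$ does not have law $(1-\theta)m'+\theta m$. For this reason we instead choose the coupling through a Bernoulli mixture: take $U,U'$ independent, $\epsilon\sim$ Bernoulli$(\theta)$, and set $U^\theta=\epsilon U+(1-\epsilon)U'$. We then apply \Cref{prop:integral_formula_flow}, or rather its Step~1--7 argument, along the interpolation $m_\lambda:=(1-\lambda)m'+\lambda m$ with a small increment $m_{\lambda+h}-m_\lambda=h(m-m')$. The plan is to prove the Gateaux-type statement
\[
\frac{d}{d\lambda}G(\mu^{t,m_\lambda}_s)=\int\Vc(m_\lambda)(x)\,(m-m')(\mathrm{d}x)\quad\text{after integrating in }x,
\]
and then to integrate in $\lambda$ over $[0,1]$. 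This gives \eqref{eq:def_linear_func} for $\Gr$ with the linear derivative $\delta_m\Gr(s,t,m)(x):=\int_0^1\Vc(m)(x_0+\lambda(x-x_0))\cdot(x-x_0)\,\mathrm{d}\lambda$, i.e. a primitive of $\Vc(m)$. Alternatively, and more simply, we use \Cref{prop:integral_formula_flow} with the synchronous coupling $U'=U+h e$ for a fixed direction $e$, divide by $h$, and let $h\to0$. Continuity of $\theta\mapsto(S^\theta,J^\theta,\overline S^\theta,\overline J^\theta,\mu^\theta)$ in law, together with the bounded continuous $\nabla_x\delta_mG$ and the uniform moment bounds on $J,\overline J$ (Step~4), yields the identity
\[
\lim_{h\to0}\frac{G(\mu^{t,(\mathrm{Id}+he)_\#m}_s)-G(\mu^{t,m}_s)}{h}=\E\big[\Vc(m)(U)\cdot e\,\big|\,\Gc_s\big].
\]
We apply this more generally with $U'=U+h\phi(U)$ for bounded smooth $\phi$. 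This identifies $\Vc(m)$ as the Lions derivative, namely the gradient of the linear derivative, in the standard sense via the lift.

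\textbf{Step 2 (regularity and primitive).} We show that $x\mapsto\Vc(m)(x)$ is continuous and bounded, and that it is a gradient field. Continuity follows from the flow being continuous in its initial point $x$, by Gronwall applied to the SDE with Lipschitz $B$ and to the linear equations for $J$ and $\overline J$, combined with continuity of $\nabla_x\delta_mG$. The gradient property follows by noting that Step~1 with perturbations $\phi$ yields $\int\Vc\cdot\phi\,\mathrm{d}m$ as a derivative of a scalar map. Concretely, at the particle level $\Vc$ is the limit of $nD_{x_i}G^n_s$, which is an honest gradient, so curl-freeness passes to the limit. Then $\delta_m\Gr$ is defined as the line-integral primitive, and \eqref{eq:def_linear_func} is recovered by combining Step~1 with the standard equivalence between Lions differentiability with jointly continuous derivative and linear functional differentiability (cf.\ \cite{carmona2018probabilisticI}).

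\textbf{Main obstacle.} The delicate point is the gradient/primitive issue: passing from the directional identities of \Cref{prop:integral_formula_flow}, which test $\Vc$ only against displacements $U-U'$, to a genuine linear functional derivative defined via mixtures. We plan to handle this through the joint continuity of $(m,x)\mapsto\Vc(m)(x)$ and the cited equivalence theorem. Verifying that continuity in $m$, which is stability of the conditional McKean--Vlasov system $(S,J,\overline S,\overline J)$ under weak convergence of initial laws, is where most of the work will lie.
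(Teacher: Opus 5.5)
Your main line is sound: the synchronous coupling $U'=U+hY$ gives the Gateaux derivative of the lift, and joint continuity of $\Vc$ plus the Lions-to-flat equivalence then produces $\delta_m\Gr$. This goes further than the paper does.

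\textbf{Comparison with the paper.} The paper applies \Cref{prop:integral_formula_flow}, calls the result the fundamental theorem of calculus in the measure argument, and reads off $\nabla_x\delta_m\Gr$ from an infinitesimal displacement of the tagged starting point $U=x$. That displacement is exactly your synchronous coupling $U'=U+he$, so the identification step is the same. The difference is that the paper stops there. It never checks that a linear functional derivative in the sense of \eqref{eq:def_linear_func} exists. That definition is built on mixtures $(1-\theta)m+\theta m'$, whereas the proposition only controls displacement interpolations $U'+\theta(U-U')$.

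Your Step~2 supplies the missing bridge:
\begin{enumerate}
\item[(a)] boundedness and joint continuity of $(m,x)\mapsto\Vc(m)(x)$;
\item[(b)] hence Fr\'echet differentiability of the lift;
\item[(c)] hence, by the equivalence result in \cite{carmona2018probabilisticI}, a flat derivative whose spatial gradient is $\Vc$.
\end{enumerate}
The paper's route is short but formal. Yours is an actual proof, at the cost of a stability result for $(S,J,\overline S,\overline J,\mu)$ under weak convergence of the initial law, which the paper never needs to state.

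\textbf{What to drop or tighten.}
\begin{itemize}
\item \emph{The Bernoulli-mixture variant.} It cannot be fed into \Cref{prop:integral_formula_flow}, because that result is a fundamental theorem of calculus along displacement paths only. Your displayed mixture derivative also has the vector field $\Vc(m_\lambda)$ where a scalar primitive should stand.
\item \emph{The particle-level curl-free argument.} It would need convergence of $nD_{x_1}G^n_s$ as a function of the tagged starting point. Steps~5--6 only give limits averaged over $i$ and tested against $\Gc_s$-measurable $H$, which is not enough. Both this and the mixture variant are unnecessary once you invoke the equivalence theorem, since that theorem produces the primitive itself.
\item \emph{The mode of convergence as $h\to0$.} The limit must be taken in $L^1(\P)$, not merely in law, because you pass to the limit inside $\E[\,\cdot\,|\Gc_s]$. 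This follows from Gronwall. Under \Cref{assum:main}, $B$ is Lipschitz in $x$, and Lipschitz in $m$ for the bounded-Lipschitz metric, since $\delta_mB$ and $\nabla_y\delta_mB$ are bounded. Also, $J$ and $\overline J$ are bounded directly by Gronwall, so you do not need Step~4.
\item \emph{Scope of the equivalence theorem.} It is stated for deterministic maps on $\Pc_2(\R^d)$. Here $\Gr(s,t,\cdot)$ is random, and \Cref{prop:integral_formula_flow} holds only almost surely for each fixed coupling. You therefore need versions of $\Gr(s,t,\cdot)$ and $\Vc$ that are regular in $m$ for every $\omega$. These are available because the common noise is additive: freezing the path of $\sigma_\circ W^\circ$ and subtracting it turns the conditional flow into a deterministic McKean--Vlasov flow. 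You also need an approximation step to pass from $\Pc_2(\R^d)$ to the paper's $\Pc(\R^d)$.
\end{itemize}
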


\begin{proof}
Let $m,m'\in\Pc(\R^d)$, and let $(U,U')$ be a coupling of
$(m,m')$. For $\theta\in[0,1]$, set
\[
U^\theta:=U'+\theta(U-U'),
\qquad
m^\theta:=\Lc(U^\theta).
\]
By \Cref{prop:integral_formula_flow},
\[
\begin{aligned}
\Gr(s,t,m)-\Gr(s,t,m')
&=
\int_0^1
\E\Big[
\nabla_x\delta_mG(\mu^{t,m^\theta}_s)(S^{t,m^\theta}_s)
\cdot
J^{t,m^\theta}_s(U-U')
\,\Big|\,\Gc_s
\Big]\mathrm{d}\theta
\\
&\quad+
\int_0^1
\E\Big[
\nabla_x\delta_mG(\mu^{t,m^\theta}_s)(\overline S^{t,m^\theta}_s)
\cdot
\overline J^{t,m^\theta}_s(U-U')
\,\Big|\,\Gc_s
\Big]\mathrm{d}\theta.
\end{aligned}
\]
This is the fundamental theorem of calculus in the measure argument. To identify
the spatial derivative of the linear functional derivative at $m$, one
considers an infinitesimal perturbation of the initial point $U=x$ in the
direction $h$. The direct perturbation at time $s$ is $J^{t,m}_s h$, while
the perturbation transmitted through the conditional law is
$\overline J^{t,m}_s h$. Hence
\[
\begin{aligned}
\nabla_x\delta_m\Gr(s,t,m)(x)\cdot h
&=
\E\Big[
\nabla_x\delta_mG(\mu^{t,m}_s)(S^{t,m}_s)
\cdot J^{t,m}_s h
\,\Big|\,\Gc_s,\ U=x
\Big]
\\
&\quad+
\E\Big[
\nabla_x\delta_mG(\mu^{t,m}_s)(\overline S^{t,m}_s)
\cdot \overline J^{t,m}_s h
\,\Big|\,\Gc_s,\ U=x
\Big].
\end{aligned}
\]
Transposing the linear maps $J^{t,m}_s$ and $\overline J^{t,m}_s$ gives the
vector formula. Note that the linear functional derivative itself is defined up to an
additive constant in $x$, as usual. Its spatial gradient is intrinsic.
\end{proof}

\begin{remark}
\leavevmode
\begin{enumerate}
\item[(i)]
The second term in the formula for
$\nabla_x\delta_m\Gr(s,t,m)$ has no finite-dimensional analogue. It is the
new contribution created by the dependence of the dynamics on the conditional
law. Indeed, when one perturbs the initial position of a tagged particle, the
perturbation has two effects. First, it changes the trajectory of this tagged
particle itself; this gives the diagonal tangent process $J^{t,m}$. Second,
because the drift depends on the conditional distribution, the perturbation of
the tagged particle also changes the conditional empirical measure seen by all
the other particles. This second effect propagates through the population and
is described, in the limit, by the off-diagonal tangent process
$\overline J^{t,m}$.

Thus the derivative of the propagated functional is not only the derivative of
the terminal functional along a single transported particle. It also contains
the derivative of the McKean--Vlasov environment generated by this particle.
This is precisely the role of the off-diagonal term.

\medskip

\item[(ii)]
One may also write an expression for a linear functional derivative
$\delta_m\Gr(s,t,m)$ itself, and not only for its spatial gradient. This
requires introducing the first-order variation of a generic particle with
respect to an infinitesimal perturbation of the initial law.

Fix a tagged trajectory $S^{t,m}$ starting from $U$, and let
$\overline S^{t,m}$ be a conditionally independent copy, given the common
noise. We define the process $\overline Y^{t,m}$ by
\[
\overline Y^{t,m}_r=0,\qquad r\in[0,t],
\]
and, for $r\in[t,s]$,
\[
\begin{aligned}
\mathrm{d}\overline Y^{t,m}_r
=
\nabla_xB(r,\overline S^{t,m}_r,\mu^{t,m}_r)
\overline Y^{t,m}_r\,\mathrm{d}r
&+
\delta_mB(r,\overline S^{t,m}_r,\mu^{t,m}_r)(S^{t,m}_r)\,\mathrm{d}r
\\
&+
\int_{\R^d\times\R^d}
\nabla_y\delta_mB(r,\overline S^{t,m}_r,\mu^{t,m}_r)(x)y\,
\widehat\nu^{t,m}_r(\mathrm{d}x,\mathrm{d}y)\,\mathrm{d}r,
\end{aligned}
\]
where
\[
\widehat\nu^{t,m}_r
:=
\Lc\bigl(\overline S^{t,m}_r,\overline Y^{t,m}_r
\,\big|\,S^{t,m},\Gc_r\bigr).
\]
The interpretation is the following. The term
\[
\delta_mB(r,\overline S^{t,m}_r,\mu^{t,m}_r)(S^{t,m}_r)
\]
is the direct effect of adding an infinitesimal mass at the tagged position
$S^{t,m}_r$. The last term accounts for the propagation of this perturbation
through the rest of the conditional population.

With this notation, a representative of the linear functional derivative of
\[
m\longmapsto \Gr(s,t,m)=G(\mu^{t,m}_s)
\]
is given, up to the usual additive normalization, by
\[
\begin{aligned}
\delta_m\Gr(s,t,m)(x)
&=
\E\Big[
\delta_mG(\mu^{t,m}_s)(S^{t,m}_s)
\,\Big|\,\Gc_s,\ U=x
\Big]
\\
&\quad+
\E\Big[
\nabla_x\delta_mG(\mu^{t,m}_s)(\overline S^{t,m}_s)
\cdot \overline Y^{t,m}_s
\,\Big|\,\Gc_s,\ U=x
\Big].
\end{aligned}
\]
The first term is the direct variation of the terminal functional $G$ due to
the tagged particle. The second term is the indirect variation coming from the
change in the conditional McKean--Vlasov flow generated by this infinitesimal
perturbation of the initial law.

Formally differentiating this expression with respect to the point $x$ gives
back the formula of \Cref{cor:derivative_propagated}. In particular, the
spatial derivative of the first term yields the diagonal contribution involving
$J^{t,m}$, while the spatial derivative of the second term yields the
off-diagonal contribution involving $\overline J^{t,m}$.
\end{enumerate}
\end{remark}

\section{Backward flow formula along an admissible conditional law} \label{sec:backward_flow}

Let $U \perp (W,W^\circ)$, $e \in [0,T]$ and $(\nu_r)_{e\le r\le T}$ be an admissible conditional law flow generated by
a bounded Lipschitz drift $\beta$. More precisely, let $X$ solve: $X_{r}=U$ for $r \in [0,e]$, and for $r \in [e,T]$,
\[
\mathrm{d}X_r
=
\beta(r,X_r,\nu_r)\,\mathrm{d}r
+\mathrm{d}W_r+\sigma_\circ\,\mathrm{d}W^\circ_r,
\]
with
\[
\nu_r:=\Lc(X_r\mid\Gc_r).
\]
Recall that
\[
\overline\Gr(s,t,m)
=
\E\bigl[G(\mu^{t,m}_s)\bigr].
\]
This deterministic functional is the Wasserstein kernel between times $t$ and
$s$.

\begin{theorem}[Backward flow formula]
\label{thm:backward_flow}
Assume {\rm \Cref{assum:main,assum:G}}. Let $0 \le e\le t\le s\le T$. Then, $\P$-a.s.,
\[
\begin{aligned}
\overline\Gr(s,t,\nu_t)
&=
G(\nu_s)
+
\int_t^s
\E\Big[
\nabla_x\delta_m\overline\Gr(s,r,\nu_r)(X_r)
\cdot
\bigl(B(r,X_r,\nu_r)-\beta(r,X_r,\nu_r)\bigr)
\,\Big|\,\Gc_r
\Big]\mathrm{d}r
\\
&\quad
-
\int_t^s
\E\Big[
\nabla_x\delta_m\overline\Gr(s,r,\nu_r)(X_r)
\,\Big|\,\Gc_r
\Big]\cdot\sigma_\circ\,\mathrm{d}W^\circ_r.
\end{aligned}
\]
In particular, with $s=T$,
\[
\begin{aligned}
\Kc_BG(t,\nu_t)
&=
G(\nu_T)
+
\int_t^T
\E\Big[
\nabla_x\delta_m\Kc_BG(r,\nu_r)(X_r)
\cdot
\bigl(B(r,X_r,\nu_r)-\beta(r,X_r,\nu_r)\bigr)
\,\Big|\,\Gc_r
\Big]\mathrm{d}r
\\
&\quad
-
\int_t^T
\E\Big[
\nabla_x\delta_m\Kc_BG(r,\nu_r)(X_r)
\,\Big|\,\Gc_r
\Big]\cdot\sigma_\circ\,\mathrm{d}W^\circ_r.
\end{aligned}
\]
\end{theorem}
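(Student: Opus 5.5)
We prove the formula by a telescoping argument over a partition of $[t,s]$, using \Cref{prop:integral_formula_flow} on each small interval to turn the one-step increments into first-order terms. The key structural fact is the flow (Markov) property of the reference dynamics: for $t\le r\le r'\le s$,
\[
\overline\Gr(s,r,m)=\E\bigl[\overline\Gr(s,r',\mu^{r,m}_{r'})\bigr],
\]
which follows from uniqueness of the conditional McKean--Vlasov equation of \cite[Theorem A.3]{djete2019mckean}, together with independence of the common-noise increments after $r'$ from $\Gc_{r'}$. Fix a partition $t=r_0<r_1<\dots<r_N=s$ with mesh tending to zero. Since $\overline\Gr(s,s,\cdot)=G$, we write
\[
G(\nu_s)-\overline\Gr(s,t,\nu_t)=\sum_{k}\Bigl[\overline\Gr(s,r_{k+1},\nu_{r_{k+1}})-\overline\Gr(s,r_k,\nu_{r_k})\Bigr].
\]
Using the flow property, each summand equals $\overline\Gr(s,r_{k+1},\nu_{r_{k+1}})-\E'\bigl[\overline\Gr(s,r_{k+1},\mu^{r_k,\nu_{r_k}}_{r_{k+1}})\bigr]$. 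Here $\E'$ integrates only over the common-noise increment on $[r_k,r_{k+1}]$, with $\nu_{r_k}$ frozen.

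The increment over $[r_k,r_{k+1}]$ is then handled by comparing two flows issued from $\nu_{r_k}$ and driven by the \emph{same} common noise: the admissible flow with drift $\beta$, and the reference flow with drift $B$. This gives the decomposition
\[
\overline\Gr(s,r_{k+1},\nu_{r_{k+1}})-\overline\Gr(s,r_{k+1},\mu^{r_k,\nu_{r_k}}_{r_{k+1}})\;+\;\Bigl(\overline\Gr(s,r_{k+1},\mu^{r_k,\nu_{r_k}}_{r_{k+1}})-\E'[\cdots]\Bigr).
\]
\begin{enumerate}
\item[(a)] \textbf{Drift term.} By \Cref{cor:derivative_propagated}, $m\mapsto\overline\Gr(s,r_{k+1},m)$ has a bounded, continuous Lions derivative, obtained by taking expectations in the corollary. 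Applying the linear-derivative identity \eqref{eq:def_linear_func} with the synchronous coupling $(X_{r_{k+1}},S^{r_k,\nu_{r_k}}_{r_{k+1}})$ of the two conditional laws, both started from $X_{r_k}$, the first difference becomes $\E[\nabla_x\delta_m\overline\Gr(\cdot)(\cdot)\cdot(\beta-B)\mid\Gc_{r_{k+1}}]\,\Delta r+o(\Delta r)$. The $o(\Delta r)$ holds because the coupling difference is $\int_{r_k}^{r_{k+1}}(\beta-B)\,\mathrm{d}r$, which is $O(\Delta r)$ uniformly, the Brownian parts cancelling exactly.
\item[(b)] \textbf{Martingale term.} The second difference is a centered $\Gc_{r_{k+1}}$-measurable increment. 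For this I would use a Clark--Ocone-type identification on one step: the map $w\mapsto\overline\Gr(s,r_{k+1},\mu^{r_k,m}_{r_{k+1}})$, with $w$ the common-noise increment, is differentiable. Perturbing $\sigma_\circ W^\circ$ by a constant shift $h$ moves every particle by $h$, both directly and through the law. By \Cref{prop:integral_formula_flow} applied with coupling $U'=U+h$, its gradient is $\E[\nabla_x\delta_m\overline\Gr(s,r_{k+1},\cdot)(\cdot)\mid\Gc]$ up to $O(\Delta r)$ corrections. It follows that this difference equals $\E[\nabla_x\delta_m\overline\Gr(s,r_k,\nu_{r_k})(X_{r_k})\mid\Gc_{r_k}]\cdot\sigma_\circ\Delta W^\circ_k$ plus an error whose conditional second moment is $o(\Delta r)$.
\end{enumerate}

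Summing over $k$, the drift pieces converge to the $\mathrm{d}r$ integral as Riemann sums. The martingale pieces converge in $L^2$ to the stochastic integral, because the integrand $r\mapsto\E[\nabla_x\delta_m\overline\Gr(s,r,\nu_r)(X_r)\mid\Gc_r]$ is bounded and continuous in probability. Both properties come from \Cref{cor:derivative_propagated}, the boundedness of $J$ and $\overline J$ given by Gronwall on \eqref{eq:off_diagonal}, and stability of the McKean--Vlasov flow in $(r,m)$. The case $s=T$ is the definition of $\Kc_BG$.

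The main obstacle is step (b): showing that the one-step fluctuation error is $o(\Delta r)$ in $L^2$ using only first-order derivatives. This requires continuity of $(r,m)\mapsto\nabla_x\delta_m\overline\Gr(s,r,m)$ rather than second derivatives. My first attempt would be to avoid second-order expansion altogether by writing the one-step fluctuation exactly via \Cref{prop:integral_formula_flow}, with the shift coupling along $\theta$. The error is then a difference of Lions derivatives evaluated at nearby measures, controlled by continuity and dominated convergence.
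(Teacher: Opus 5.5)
Your argument is correct in outline, but it takes a genuinely different route from the paper.

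\textbf{The paper's route.} The paper never works on the limiting flow directly. It introduces the reference $B$-particle kernel $\Gr^n(s,r,\xbb)$ and uses the fact that it is a martingale along the $B$-particles to obtain the integrand $\sum_i\nabla_{x_i}\Gr^n$ from finite-dimensional martingale representation. It then switches from $B$ to $\beta$ by a Girsanov change of measure on the idiosyncratic noises, which is legitimate in finite dimension, and conditions on the common noise to remove the $\mathrm{d}W^i$ integrals. Only at the end does it identify $\sum_i\nabla_{x_i}\Gr^n$ with $\nabla_x\delta_m\overline\Gr$ through the diagonal/off-diagonal analysis and pass to the limit.

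\textbf{Your route.} You stay at the McKean--Vlasov level throughout:
\begin{enumerate}
\item[(i)] the flow identity $\overline\Gr(s,r,m)=\E[\overline\Gr(s,r',\mu^{r,m}_{r'})]$ lets you telescope;
\item[(ii)] the synchronous coupling isolates $\beta-B$ as the only finite-variation discrepancy, and this replaces Girsanov;
\item[(iii)] a shift of $\sigma_\circ W^\circ$ translates the whole conditional law, which produces the integrand $\E[\nabla_x\delta_m\overline\Gr(s,r,\nu_r)(X_r)\mid\Gc_r]$ and replaces particle-level martingale representation.
\end{enumerate}

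\textbf{What each buys.} Your proof is intrinsic and uses \Cref{cor:derivative_propagated} as a black box. It makes transparent why no time or second-order measure derivatives are needed: the flow property of the reference dynamics absorbs them, and only a modulus of continuity of the first Lions derivative enters. It also avoids the delicate step of showing that particle gradients along a non-i.i.d.\ $\beta$-driven configuration converge to the Lions derivative. The price is that you must prove two things yourself: joint continuity of $(r,m,x)\mapsto\nabla_x\delta_m\overline\Gr(s,r,m)(x)$, and the flow property for $\Gc_{r}$-measurable random initial laws. The paper instead obtains the martingale integrand from classical finite-dimensional tools.

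\textbf{A point to make precise in (b).} The one-step functional depends on the whole common-noise path on $[r_k,r_{k+1}]$, not only on a constant shift. You can handle this in either of two ways:
\begin{enumerate}
\item[(1)] apply Clark--Ocone with shifts starting at every $r\in[r_k,r_{k+1}]$; or
\item[(2)] observe, by Gronwall and the bounded Lipschitz $B$, that $\mu^{r_k,m}_{r_{k+1}}$ coincides with the translate by $\sigma_\circ\Delta W^\circ_k$ of a $\Gc_{r_k}$-measurable law, up to an error $O\bigl(\Delta r\,\sup_{[r_k,r_{k+1}]}|\sigma_\circ(W^\circ_\cdot-W^\circ_{r_k})|\bigr)$ in Wasserstein-1 distance; then expand along $\theta\mapsto\theta\sigma_\circ\Delta W^\circ_k$.
\end{enumerate}
In either case the centred remainders are martingale differences with second moment $o(\Delta r)$. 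Their orthogonality is what makes the accumulated error vanish.
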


\begin{proof}
The proof follows the same philosophy as the proof of
\Cref{prop:integral_formula_flow}, but now we also use a dynamic argument. The
functional $\overline\Gr(s,r,\cdot)$ is built from the reference dynamics with
drift $B$. Therefore, when it is evaluated along a flow driven by another
drift $\beta$, the only finite-variation discrepancy is the difference
$B-\beta$. The common-noise martingale remains visible because the state
variable is the conditional law.

\medskip

\noindent
\textbf{Step 1. Particle approximation of the admissible flow.}

Let $(X^i,W^i,U^i)_{i\ge1}$ be conditionally independent copies of $(X,W,U)$ given
the common noise $W^\circ$. Define
\[
\nu^n_r:=\frac1n\sum_{i=1}^n\delta_{X^i_r}.
\]
At the particle level, for $e \le r$,
\[
\mathrm{d}X^i_r
=
\beta(r,X^i_r,\nu^n_r)\,\mathrm{d}r
+\mathrm{d}W^i_r+\sigma_\circ\,\mathrm{d}W^\circ_r.
\]
Under the usual assumptions, conditional propagation of chaos gives
\[
\nu^n_r\Longrightarrow\nu_r,
\qquad r\in[e,T],
\]
conditionally on the common noise.

\medskip

\noindent
\textbf{Step 2. The reference finite-dimensional kernel.}

Fix $0\le r\le s$ and $\xbb=(x_1,\dots,x_n)\in(\R^d)^n$. Starting from
$\xbb$ at time $r$, define the reference $B$-particle system by
\[
S^{i,n,r,\xbb}_r=x_i,
\]
and
\[
\mathrm{d}S^{i,n,r,\xbb}_\ell
=
B(\ell,S^{i,n,r,\xbb}_\ell,\mu^{n,r,\xbb}_\ell)\,\mathrm{d}\ell
+\mathrm{d}W^i_\ell+\sigma_\circ\,\mathrm{d}W^\circ_\ell,
\qquad
\ell\in[r,s],
\]
where
\[
\mu^{n,r,\xbb}_\ell
:=
\frac1n\sum_{i=1}^n\delta_{S^{i,n,r,\xbb}_\ell}.
\]
Define
\[
\Gr^n(s,r,\xbb)
:=
\E\Big[
G\bigl(\mu^{n,r,\xbb}_s\bigr)
\Big].
\]
At the terminal time,
\[
\Gr^n(s,s,\xbb)
=
G\left(\frac1n\sum_{i=1}^n\delta_{x_i}\right).
\]

\medskip

\noindent
\\textbf{Step 3. Reference martingale property.}

Because $\Gr^n$ is generated by the $B$-particle dynamics, it solves the
backward Kolmogorov equation associated with the $n$-particle generator whose
drift is $B$. In particular, writing $\Ubb^n:=(U^1,\dots,U^n)$ and
$\Sbb^n:=\Sbb^{n,0,\Ubb^n}$,
\begin{align*}
    \Gr^n(s,r,\Sbb^{n}_r)
    =
    \E\Big[
    G\left(\mu^{n,0,\Ubb^n}_s\right)\,\Big|\,\Fc^{n}_r
    \Big],
\end{align*}
where, for $e\in[0,s]$,
\begin{align*}
\F^{e,n}:=(\Fc^{e,n}_r)_{r \in [e,s]},
\qquad
\Fc^{e,n}_r:=\sigma\{\, U^i,\; W^i_u-W^i_{e},\; W^\circ_u-W^\circ_{e}
\;:\; i \le n,\; u \in [e,r]\,\},
\end{align*}
and $\F^n:=\F^{0,n}$. Equivalently, if $\Sbb^n$ follows the $B$-particle
dynamics, then
\[
r\longmapsto \Gr^n(s,r,\Sbb^n_r)
\]
is an $\F^n$-martingale. The same argument, applied on $[e,s]$ with initial
condition $\Ubb^n$ at time $e$, shows that
\[
\left(\Gr^n\left(s,r,\Sbb^{n,e,\Ubb^n}_r\right)\right)_{r\in[e,s]}
\]
is an $\F^{e,n}$-martingale. Therefore, by martingale representation, for
$r\in[e,s]$,
\[
\mathrm{d}\Gr^n\left(s,r,\Sbb^{n,e,\Ubb^n}_r\right)
=
\sum_{i=1}^n
\nabla_{x_i}\Gr^n\left(s,r,\Sbb^{n,e,\Ubb^n}_r\right)
\cdot
\left(\mathrm{d}W^i_r+\sigma_\circ\,\mathrm{d}W^\circ_r\right).
\]

We now pass from the reference dynamics with drift $B$ to the dynamics with
drift $\beta$ by a Girsanov change of measure. More precisely, set
\[
\mathrm{d}\P^n:=Z^n_s\,\mathrm{d}\P,
\]
where $Z^n_{r\wedge e}=1$ and, for $r\in[e,s]$,
\[
\mathrm{d}Z^n_r
=
Z^n_r
\sum_{i=1}^n
\bigl(
\beta(r,S^{i,n,e,\Ubb^n}_r,\mu^{n,e,\Ubb^n}_r)
-
B(r,S^{i,n,e,\Ubb^n}_r,\mu^{n,e,\Ubb^n}_r)
\bigr)
\cdot \mathrm{d}W^i_r .
\]
Then, under $\P^n$,
\[
\widetilde W^{i,n}_r
:=
W^i_r
-
\int_e^r
\bigl(
\beta(u,S^{i,n,e,\Ubb^n}_u,\mu^{n,e,\Ubb^n}_u)
-
B(u,S^{i,n,e,\Ubb^n}_u,\mu^{n,e,\Ubb^n}_u)
\bigr)\,\mathrm{d}u
\]
is a Brownian motion on $[e,s]$. Hence the reference particles
$\Sbb^{n,e,\Ubb^n}$ have, under $\P^n$, the same dynamics as the particles
driven by the drift $\beta$. Equivalently, evaluating the same identity along
$\Xbb^n_r=(X^1_r,\dots,X^n_r)$, with empirical measure $\nu^n_r$, we obtain
\[
\begin{aligned}
\mathrm{d}\Gr^n(s,r,\Xbb^n_r)
&=
\sum_{i=1}^n
\nabla_{x_i}\Gr^n(s,r,\Xbb^n_r)
\cdot
\bigl(
\beta(r,X^i_r,\nu^n_r)
-
B(r,X^i_r,\nu^n_r)
\bigr)\,\mathrm{d}r
\\
&\quad+
\sum_{i=1}^n
\nabla_{x_i}\Gr^n(s,r,\Xbb^n_r)\cdot\mathrm{d}W^i_r
\\
&\quad+
\sum_{i=1}^n
\nabla_{x_i}\Gr^n(s,r,\Xbb^n_r)
\cdot\sigma_\circ\,\mathrm{d}W^\circ_r .
\end{aligned}
\]
Integrating from $t \in [e,s]$ to $s$ and using the terminal identity gives
\[
\begin{aligned}
\Gr^n(s,t,\Xbb^n_t)
&=
G(\nu^n_s)
+
\int_t^s
\sum_{i=1}^n
\nabla_{x_i}\Gr^n(s,r,\Xbb^n_r)
\cdot
\bigl(B(r,X^i_r,\nu^n_r)-\beta(r,X^i_r,\nu^n_r)\bigr)\,\mathrm{d}r
\\
&\quad
-
\sum_{i=1}^n
\int_t^s
\nabla_{x_i}\Gr^n(s,r,\Xbb^n_r)\cdot\mathrm{d}W^i_r
\\
&\quad
-
\int_t^s
\sum_{i=1}^n
\nabla_{x_i}\Gr^n(s,r,\Xbb^n_r)\cdot\sigma_\circ\,\mathrm{d}W^\circ_r.
\end{aligned}
\]

\medskip

\noindent
\textbf{Step 4. Conditioning on the common noise.}

We take conditional expectation with respect to the common noise. The stochastic
integral with respect to the idiosyncratic noises disappears. The common-noise
integral remains because it is measurable with respect to the common noise. We
obtain, for $t \in [e,s]$,
\[
\begin{aligned}
\E\bigl[\Gr^n(s,t,\Xbb^n_t)\mid\Gc_t\bigr]
&=
G(\nu^n_s)
+
\int_t^s
\E\Big[
\sum_{i=1}^n
\nabla_{x_i}\Gr^n(s,r,\Xbb^n_r)
\cdot
\bigl(B(r,X^i_r,\nu^n_r)-\beta(r,X^i_r,\nu^n_r)\bigr)
\,\Big|\,\Gc_r
\Big]\mathrm{d}r
\\
&\quad
-
\int_t^s
\E\Big[
\sum_{i=1}^n
\nabla_{x_i}\Gr^n(s,r,\Xbb^n_r)
\,\Big|\,\Gc_r
\Big]\cdot\sigma_\circ\,\mathrm{d}W^\circ_r.
\end{aligned}
\]
This is the finite-particle backward flow formula.

\medskip

\noindent
\textbf{Step 5. Identification of the particle gradients.}

For $h\in\R^d$, the same particle differentiation as before gives
\[
\nabla_{x_i}\Gr^n(s,r,\xbb)\cdot h
=
\frac1n\sum_{j=1}^n
\E\Big[
\nabla_x\delta_mG(\mu^{n,r,\xbb}_s)(S^{j,n,r,\xbb}_s)
\cdot
J^{j,i,n,r,\xbb}_s h
\Big].
\]
Splitting $j=i$ and $j\neq i$, and passing to the limit, we recover exactly
the Lions derivative of the propagated functional shown in the proof of \Cref{prop:integral_formula_flow} and \Cref{cor:derivative_propagated}. Hence
\[
\sum_{i=1}^n
\nabla_{x_i}\Gr^n(s,r,\Xbb^n_r)
\cdot
\bigl(B(r,X^i_r,\nu^n_r)-\beta(r,X^i_r,\nu^n_r)\bigr)
\]
converges to
\[
\E\Big[
\nabla_x\delta_m\overline\Gr(s,r,\nu_r)(X_r)
\cdot
\bigl(B(r,X_r,\nu_r)-\beta(r,X_r,\nu_r)\bigr)
\,\Big|\,\Gc_r
\Big].
\]

\medskip

\noindent
\textbf{Step 6. Passage to the limit.}

By conditional propagation of chaos,
\[
\nu^n_s\Longrightarrow\nu_s,
\]
and therefore
\[
G(\nu^n_s)\longrightarrow G(\nu_s).
\]
Moreover,
\[
\E\bigl[\Gr^n(s,t,\Xbb^n_t)\mid\Gc_t\bigr]
\longrightarrow
\overline\Gr(s,t,\nu_t).
\]
Passing to the limit in the finite-particle backward formula yields
\[
\begin{aligned}
\overline\Gr(s,t,\nu_t)
&=
G(\nu_s)
+
\int_t^s
\E\Big[
\nabla_x\delta_m\overline\Gr(s,r,\nu_r)(X_r)
\cdot
\bigl(B(r,X_r,\nu_r)-\beta(r,X_r,\nu_r)\bigr)
\,\Big|\,\Gc_r
\Big]\mathrm{d}r
\\
&\quad
-
\int_t^s
\E\Big[
\nabla_x\delta_m\overline\Gr(s,r,\nu_r)(X_r)
\,\Big|\,\Gc_r
\Big]\cdot\sigma_\circ\,\mathrm{d}W^\circ_r.
\end{aligned}
\]
This proves the theorem.
\end{proof}

\paragraph*{Interpretation as a McKean--Vlasov semigroup}

\Cref{thm:backward_flow} shows that the operator
\[
G\longmapsto \Kc_BG
\]
plays the same role on $\Pc(\R^d)$ as the semigroup does on $\R^d$.
The analogy can be summarized as follows.

In finite dimension, if
\[
v(t,x)=\E[g(X^{t,x}_T)]
\]
is generated by a reference diffusion with drift $B$, then along another
diffusion driven by $\beta$,
\[
\mathrm{d}v(t,X_t)
=
\nabla_xv(t,X_t)\cdot(\beta(t,X_t)-B(t,X_t))\,\mathrm{d}t
+
\nabla_xv(t,X_t)\cdot\mathrm{d}W_t.
\]
Only the first spatial derivative appears.
In the Wasserstein setting, if
\[
\Kc_BG(t,m)=\E[G(\mu^{t,m}_T)],
\]
then along a conditional law flow $(\nu_t)$ driven by $\beta$,
\[
\begin{aligned}
\mathrm{d}\Kc_BG(t,\nu_t)
&=
\E\Big[
\nabla_x\delta_m\Kc_BG(t,\nu_t)(X_t)
\cdot
(\beta(t,X_t,\nu_t)-B(t,X_t,\nu_t))
\,\Big|\,\Gc_t
\Big]\mathrm{d}t
\\
&\quad+
\E\Big[
\nabla_x\delta_m\Kc_BG(t,\nu_t)(X_t)
\,\Big|\,\Gc_t
\Big]\cdot\sigma_\circ\,\mathrm{d}W^\circ_t.
\end{aligned}
\]
Again, only the first Lions derivative appears. The time derivative and the
second--order Lions derivatives are hidden inside the kernel construction.

\paragraph*{Feynman--Kac interpretation.}

The McKean--Vlasov semigroup provides a Feynman--Kac representation for linear
parabolic equations on the space of probability measures, in direct analogy
with the classical finite-dimensional setting. Let
\[
B:[0,T]\times\R^d\times\Pc(\R^d)\longrightarrow\R^d
\]
be a bounded drift. For a sufficiently smooth map
$\varphi:[0,T]\times\Pc(\R^d)\to\R$, define
\[
\begin{aligned}
\Lgen^B\varphi(t,m)
&:=
\int_{\R^d}
\nabla_x\delta_m\varphi(t,m)(x)\cdot B(t,x,m)\,m(\mathrm dx)
\\
&\quad+
\frac12\int_{\R^d}
\trace \left[
\nabla_x^2\delta_m\varphi(t,m)(x)
\bigl(\Ir_d+\sigma_\circ\sigma_\circ^\top\bigr)
\right]m(\mathrm dx)
\\
&\quad+
\frac12\int_{\R^d\times\R^d}
\trace \left[
\nabla_y\nabla_x\delta_m^2\varphi(t,m)(x,y)
\sigma_\circ\sigma_\circ^\top
\right]m(\mathrm dx)m(\mathrm dy).
\end{aligned}
\]

We call $\varphi$ smooth if it is continuously differentiable in time,
admits first and second linear functional derivatives in the measure variable,
and the maps
\[
\partial_t\varphi,\qquad
\nabla_x\delta_m\varphi,\qquad
\nabla_x^2\delta_m\varphi,\qquad
\nabla_y\nabla_x\delta_m^2\varphi
\]
are continuous and bounded on their respective domains.

Consider the terminal--value problem
\begin{equation}
\label{eq:Wasserstein_heat_equation}
\left\{
\begin{aligned}
\partial_t\Uc(t,m)+\Lgen^B\Uc(t,m)&=0,
&& (t,m)\in[0,T)\times\Pc(\R^d),
\\
\Uc(T,m)&=G(m),
&&m\in\Pc(\R^d).
\end{aligned}
\right.
\end{equation}
Let $(\mu_r^{t,m})_{r\in[t,T]}$ be the conditional McKean--Vlasov flow
generated by $B$. If $\Uc$ is a smooth solution of
\eqref{eq:Wasserstein_heat_equation}, then the Itô formula along
$(\mu_r^{t,m})_{r\in[t,T]}$ yields
\[
\mathrm d\Uc(r,\mu_r^{t,m})
=
\left(
\int_{\R^d}
\nabla_x\delta_m\Uc(r,\mu_r^{t,m})(x)\,
\mu_r^{t,m}(\mathrm dx)
\right)
\cdot\sigma_\circ\,\mathrm dW_r^\circ.
\]
Hence $\bigl(\Uc(r,\mu_r^{t,m})\bigr)_{r\in[t,T]}$ is a martingale and,
using the terminal condition,
\[
\Uc(t,m)
=
\E\bigl[\Uc(T,\mu_T^{t,m})\bigr]
=
\E\bigl[G(\mu_T^{t,m})\bigr]
=
\Kc_BG(t,m).
\]
Therefore, any smooth solution of
\eqref{eq:Wasserstein_heat_equation} is necessarily represented by the
McKean--Vlasov semigroup:
\[
\Uc(t,m)=\Kc_BG(t,m),
\qquad
(t,m)\in[0,T]\times\Pc(\R^d).
\]

Conversely, whenever the map
\[
(t,m)\longmapsto\Kc_BG(t,m)
\]
is sufficiently smooth for the Wasserstein Itô formula to apply, the martingale
property (see \Cref{sec:martingale_representation} below) of
\[
r\longmapsto\Kc_BG(r,\mu_r^{t,m})
\]
shows that $\Kc_BG$ solves
\eqref{eq:Wasserstein_heat_equation}. Thus the McKean--Vlasov semigroup is the
natural Feynman--Kac representation of the linear Wasserstein parabolic
equation generated by $\Lgen^B$.

\begin{remark}[The driftless reference case]
Assume that the reference drift is $B=0$. Then the backward flow formula becomes
\begin{align} \label{eq:B=0}
\Kc_0G(t,\nu_t)
=
G(\nu_T)
&-
\int_t^T
\E\Big[
\nabla_x\delta_m\Kc_0G(r,\nu_r)(X_r)
\cdot
\beta(r,X_r,\nu_r)
\,\Big|\,\Gc_r
\Big]\mathrm{d}r
\nonumber
\\
&\quad-
\int_t^T
\E\Big[
\nabla_x\delta_m\Kc_0G(r,\nu_r)(X_r)
\,\Big|\,\Gc_r
\Big]\cdot\sigma_\circ\,\mathrm{d}W^\circ_r .
\end{align}
This is the closest analogue of the classical heat-semigroup representation. The
kernel $\Kc_0G$ is constructed from the conditional Brownian flow, while the
identity above holds along an arbitrary admissible conditional law flow driven
by a drift $\beta$. The drift term in \eqref{eq:B=0} is therefore exactly the
correction generated by evaluating the Brownian kernel along a non-Brownian
flow.
\end{remark}

\section{A martingale representation formulation} \label{sec:martingale_representation}

In the driftless reference case $B=0$, the identity \eqref{eq:B=0} can be
viewed as a martingale representation theorem on the space of probability
measures. The role of the unknown process $Y$ is played by the value of the
semigroup along the conditional law flow, while the role of the martingale
integrand is played by the first-order Lions derivative of the semigroup.
We now formulate this idea intrinsically. The point is to characterize the pair
\[
\bigl(\Kc_0G,\nabla_x\delta_m\Kc_0G\bigr)
\]
as the unique pair satisfying the backward identity along all admissible
conditional law flows.

\begin{definition}[Martingale representation problem]
\label{def:martingale_representation_problem}
Let $G:\Pc(\R^d)\to\R$ be given. A pair of Borel maps
\[
Y:[0,T]\times\Pc(\R^d)\to\R,
\qquad
Z:[0,T]\times\R^d\times\Pc(\R^d)\to\R^d
\]
is called a solution of the martingale representation problem associated with
$G$ if the following property holds.

For every initial time $e\in[0,T]$, every initial law $m\in\Pc(\R^d)$,
and every admissible conditional law flow $(\nu_r)_{r\in[e,T]}$ generated by
a bounded Lipschitz drift $\beta$, namely
\[
\mathrm{d}X_r
=
\beta(r,X_r,\nu_r)\,\mathrm{d}r
+\mathrm{d}W_r+\sigma_\circ\,\mathrm{d}W^\circ_r,
\qquad
\nu_r=\Lc(X_r\mid\Gc_r),
\qquad
\nu_e=m,
\]
the following integrability condition is satisfied:
\[
\E\left[
\sup_{r\in[e,T]} |Y(r,\nu_r)|
+
\int_e^T
\left(
|Z(r,X_r,\nu_r)|
+
\left|
\E[Z(r,X_r,\nu_r)\mid\Gc_r]
\right|^2
\right)\mathrm{d}r
\right]
<\infty,
\]
and, $\P$-a.s., for every $t\in[e,T]$,
\[
\begin{aligned}
Y(t,\nu_t)
=
G(\nu_T)
-
\int_t^T
\E\Big[
Z(r,X_r,\nu_r)\cdot\beta(r,X_r,\nu_r)
\,\Big|\,\Gc_r
\Big]\mathrm{d}r
-
\int_t^T
\E\Big[
Z(r,X_r,\nu_r)
\,\Big|\,\Gc_r
\Big]\cdot\sigma_\circ\,\mathrm{d}W^\circ_r .
\end{aligned}
\]
\end{definition}

\begin{definition}[Uniqueness]
We say that the martingale representation problem is unique if, whenever
$(Y^1,Z^1)$ and $(Y^2,Z^2)$ are two solutions, then
\[
Y^1(t,m)=Y^2(t,m),
\qquad
\text{for all } (t,m)\in[0,T]\times\Pc(\R^d),
\]
and
\[
Z^1(t,x,m)=Z^2(t,x,m),
\qquad
m(\mathrm{d}x)\text{-a.e.},
\]
for every $(t,m)\in[0,T]\times\Pc(\R^d)$.
\end{definition}

\begin{proposition}[Martingale representation by the Wasserstein heat semigroup]
\label{prop:martingale_representation_kernel}
Assume that $G:\Pc(\R^d)\to\R$ is bounded continuous and admits a bounded linear functional
derivative $\delta_mG$ such that for each $x \in \R^d$,
\[
m\longmapsto \delta_mG(m)(x)
\]
is continuous. Then the martingale
representation problem associated with $G$ admits a unique solution, given by
\[
Y(t,m)=\Kc_0G(t,m),
\qquad
Z(t,x,m)=\nabla_x\delta_m\Kc_0G(t,m)(x).
\]
\end{proposition}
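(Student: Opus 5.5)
We prove existence and uniqueness separately. Existence will follow from \Cref{thm:backward_flow} with $B=0$; uniqueness will come from a conditional-expectation argument combined with identification of the martingale integrand, run along a suitable family of admissible flows.

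\textbf{Existence.} With $B=0$, \Cref{assum:main} is trivial. If $G$ satisfies \Cref{assum:G}, identity \eqref{eq:B=0} is exactly the backward relation in \Cref{def:martingale_representation_problem} with $Y=\Kc_0G$ and $Z=\nabla_x\delta_m\Kc_0G$. The present hypotheses are weaker: only $\delta_mG$ is assumed bounded and continuous, and no gradient is assumed. We handle this by exploiting the Brownian structure of the driftless flow. For $B=0$ we have $\mu^{t,m}_T=\Lc(U+W_T-W_t+\sigma_\circ(W^\circ_T-W^\circ_t)\mid\Gc_T)$, which is $m$ convolved with a Gaussian and then shifted by the common noise. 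Hence
\[
\delta_m\Kc_0G(t,m)(x)=\E\Big[\int\delta_mG(\mu^{t,m}_T)(x+y+\sigma_\circ(W^\circ_T-W^\circ_t))\,\gamma_{T-t}(y)\,\mathrm{d}y\Big],
\]
and the derivative in $x$ falls on the Gaussian density $\gamma_{T-t}$. This gives a bound on $\nabla_x\delta_m\Kc_0G(t,m)$ of order $\|\delta_mG\|_\infty (T-t)^{-1/2}$, together with continuity in $m$.

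We then mollify. Set $G^\varepsilon(m):=G(m*\gamma_\varepsilon)$. This functional satisfies \Cref{assum:G}, and $\nabla_x\delta_m\Kc_0G^\varepsilon$ converges pointwise, with the same $(T-r)^{-1/2}$ bound, to $\nabla_x\delta_m\Kc_0G$. We apply \eqref{eq:B=0} to $G^\varepsilon$ and pass to the limit. The $\mathrm{d}r$-integral converges by dominated convergence, since $\beta$ is bounded and $(T-r)^{-1/2}$ is integrable. The stochastic integral converges by the Itô isometry, since $(T-r)^{-1}$ is not integrable but we only need $|\E[Z\mid\Gc_r]|^2$ integrable. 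I would try first to gain integrability near $T$ by writing $\E[Z\mid\Gc_r]=\nabla$ of the $\Gc$-conditional quantity. If that fails, I would use that $\int_t^T$ of the stochastic integral converges in $L^2$ because its $L^2$ norm equals the variance of the bounded terms $G(\nu_T)-\Kc_0G(t,\nu_t)-(\text{drift term})$. This handles the integrability requirement, and is the main technical obstacle.

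\textbf{Uniqueness.} Let $(Y,Z)$ be a solution, fix $(e,m)$, and take $\beta=0$. The backward identity reads
\[
Y(t,\nu_t)=G(\nu_T)-\int_t^T\E[Z\mid\Gc_r]\cdot\sigma_\circ\,\mathrm{d}W^\circ_r,
\]
and the square-integrability condition makes the stochastic integral a true martingale. Taking expectations at $t=e$, where $\nu_e=m$ is deterministic, gives $Y(e,m)=\E[G(\mu^{e,m}_T)]=\Kc_0G(e,m)$. So $Y$ is determined.

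For $Z$, subtract the two identities for the two solutions along an arbitrary $\beta$. Since $Y^1=Y^2$, we obtain
\[
0=\int_t^T\E[\Delta Z\cdot\beta\mid\Gc_r]\,\mathrm{d}r+\int_t^T\E[\Delta Z\mid\Gc_r]\cdot\sigma_\circ\,\mathrm{d}W^\circ_r
\]
for all $t$. A continuous semimartingale that vanishes identically has zero martingale part and zero finite-variation part. Hence $\E[\Delta Z(r,X_r,\nu_r)\cdot\beta(r,X_r,\nu_r)\mid\Gc_r]=0$ for $\mathrm{d}r\otimes\P$-a.e. $(r,\omega)$.

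We take $\beta(r,x,\nu)=\varphi(x)$ for a bounded Lipschitz vector field $\varphi$. At the point $r=e$ we then need $\int\Delta Z(e,x,m)\cdot\varphi(x)\,m(\mathrm{d}x)=0$ for all such $\varphi$. Since the identity only holds for a.e. $r$, we must localize. One option is to use $\beta$ supported on $[e,e+h]$ and let $h\to0$; here continuity in $r$ is unavailable for a general Borel $Z$. Instead, we exploit the fact that the equation holds for every starting time $e$ and every flow. For $r>e$ the laws $\nu_r$ are random, which complicates a pointwise statement.

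The cleanest route is to take the flow started at $e$ with $\beta=0$ and with $\beta=\varphi$, and to use the identity for $Y$ along each. We rewrite
\[
\Kc_0G(t,\nu_t)-\E[\ldots]
\]
so that uniqueness of $Z$ holds $\mathrm{d}r\otimes\mathrm{d}\P\otimes\nu_r(\mathrm{d}x)$-a.e., and then read the pointwise-in-$(t,m)$ statement as following from applying this at the deterministic initial time. This last localization, from almost every $r$ to the prescribed pair $(t,m)$, is where I expect care is needed. I would handle it by choosing $e=t$ and a sequence of flows on $[t,t+h]$ and using the Lebesgue differentiation theorem, or by interpreting the uniqueness statement modulo null sets in time.
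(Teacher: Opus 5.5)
Your existence argument and the identification of $Y$ are sound and run parallel to the paper. The mollification $G^\varepsilon(m)=G(m*\gamma_\varepsilon)$ is a concrete version of the paper's unspecified ``approximation argument''. Controlling the common-noise integral through the bounded quantities $G(\nu_T)$, $\Kc_0G(t,\nu_t)$ and the $\mathrm{d}r$-integral is the paper's It\^o-on-$|Y|^2$ bound in another form. This also settles the limit $\varepsilon\to0$ that you left open. The bounded terms converge in $L^2$, so the It\^o isometry makes the integrands $\sigma_\circ^\top\E[Z^\varepsilon\mid\Gc_r]$ Cauchy in $L^2(\mathrm{d}r\otimes\mathrm{d}\P)$, and their pointwise limit identifies the limiting integrand.

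The gap is the uniqueness of $Z$, which you leave at a list of options.

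\emph{Why your fallback fails.} Choosing $\beta(r,x,\mu)=\varphi(x)$ tests $\Delta Z:=Z^1-Z^2$ against $\varphi$ only along the flow $\nu^\varphi$ that $\varphi$ itself generates. You therefore never test $\Delta Z(r,\cdot,\nu_r)$ against many fields along one fixed flow, so the claimed $\mathrm{d}r\otimes\mathrm{d}\P\otimes\nu_r(\mathrm{d}x)$-a.e.\ uniqueness does not follow.

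\emph{The paper's idea, and its weak point.} The paper uses the one point shared by all flows from $(e,m)$: at $r=e$ one has $\nu_e=m$ and $X_e=U\sim m$, while $\beta(e,\cdot,m)$ is an arbitrary test field. This gives $\int\Delta Z(e,x,m)\cdot\varphi(x)\,m(\mathrm{d}x)=0$. But the paper passes from a.e.\ $r$ to $r=e$ only via continuity of $r\mapsto Z^2(r,X_r,\nu_r)$. As you suspected, that says nothing about a Borel $Z^1$ at the single time $e$.

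\emph{The statement fails as written.} Take $Z^1:=Z^2+v\1_{\{t=t_0\}}$ with $v\neq0$. It leaves every $\mathrm{d}r$- and $\mathrm{d}W^\circ_r$-integral unchanged, so it is another solution; likewise $Z(T,\cdot,\cdot)$ never enters at all. Reading the claim modulo null time sets does not rescue it. For $r>e$ the idiosyncratic noise makes $\nu_r$ absolutely continuous (Girsanov given the common noise). Hence the values of $Z$ at, e.g., $m=\delta_0$ are never used at positive times, for any $t$.

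\emph{The fix.} What is missing is a restriction of the solution class, e.g.\ $Z$ continuous on $[0,T)\times\R^d\times\Pc(\R^d)$ with $|Z(r,x,\mu)|\le C(T-r)^{-1/2}$. The semigroup solution qualifies, by your Gaussian formula. With that restriction, take your choice $\beta(r,x,\mu)=\varphi(x)$ and let $r\downarrow e$ along times where $\E[\Delta Z(r,X_r,\nu_r)\cdot\varphi(X_r)]=0$. Dominated convergence gives $\int\Delta Z(e,x,m)\cdot\varphi(x)\,m(\mathrm{d}x)=0$ for all bounded Lipschitz $\varphi$. Hence $Z^1(e,\cdot,m)=Z^2(e,\cdot,m)$ $m$-a.e.
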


\begin{proof}
We split the proof into two parts: existence and uniqueness.

\medskip

\noindent
\textbf{Step 1. Existence.}

By the backward flow formula in the case $B=0$, the pair
\[
Y(t,m):=\Kc_0G(t,m),
\qquad
Z(t,x,m):=\nabla_x\delta_m\Kc_0G(t,m)(x)
\]
satisfies, along every admissible flow $(\nu_r)_{r\in[e,T]}$ driven by a drift
$\beta$,
\[
\begin{aligned}
\Kc_0G(t,\nu_t)
=
G(\nu_T)
-
\int_t^T
\E\Big[
\nabla_x\delta_m\Kc_0G(r,\nu_r)(X_r)
\cdot
\beta(r,X_r,\nu_r)
\,\Big|\,\Gc_r
\Big]\mathrm{d}r
-
\int_t^T
\E\Big[
\nabla_x\delta_m\Kc_0G(r,\nu_r)(X_r)
\,\Big|\,\Gc_r
\Big]\cdot\sigma_\circ\,\mathrm{d}W^\circ_r .
\end{aligned}
\]
Thus $(Y,Z)$ is a solution of the martingale representation problem.
It remains to explain the integrability condition and why, in the driftless reference case, the derivative of
the kernel can be constructed from the linear functional derivative of $G$,
without assuming the terminal Lions derivative
$\nabla_x\delta_mG$. Let
\[
M^{t,m}_T:=\Lc(N^{t,U}_T\mid\Gc_T),
\]
where
\[
N^{t,U}_r=U,\qquad r\in[0,t],
\]
and, for $r\in[t,T]$,
\[
\mathrm{d}N^{t,U}_r
=
\mathrm{d}W_r+\sigma_\circ\,\mathrm{d}W^\circ_r.
\]
For a point $x\in\R^d$, write $N^{t,x}$ for the same process starting from
$x$. The Brownian smoothing gives, for $t<T$,
\[
\nabla_x\delta_m\Kc_0G(t,m)(x)
=
\nabla_x
\E\Big[
\delta_mG(M^{t,m}_T)(N^{t,x}_T)
\Big].
\]
By the Gaussian integration-by-parts formula, for each coordinate $i=1,\dots,d$,
\[
\begin{aligned}
\nabla_{x_i}\delta_m\Kc_0G(t,m)(x)
=
\E\Big[
\nabla_{x_i}\delta_mG(M^{t,m}_T)(N^{t,x}_T)
\Big]
=
\E\Big[
\delta_mG(M^{t,m}_T)(N^{t,x}_T)
\frac{(W_T^i-W_t^i)}{T-t}
\Big],
\end{aligned}
\]
whenever the first equality is justified. The second equality, however, still makes
sense under the weaker assumption that only $\delta_mG$ exists. Hence, by an approximation argument, the identity defines
$\nabla_x\delta_m\Kc_0G$ for $t<T$ without requiring
$\nabla_x\delta_mG$ at the terminal level.

It remains to check the integrability condition in the definition of the
martingale representation problem. Since $G$ is bounded, the kernel is bounded
as well:
\[
|Y(t,m)|
=
|\Kc_0G(t,m)|
\leq \|G\|_\infty .
\]
Next, the Gaussian integration-by-parts formula gives the estimate
\[
|Z(t,x,m)|
=
|\nabla_x\delta_m\Kc_0G(t,m)(x)|
\leq
\frac{C}{\sqrt{T-t}},
\qquad 0\leq t<T,
\]
for a constant $C$ depending only on the bound of $\delta_mG$ and on the
dimension. Indeed, for each coordinate $i$,
\[
\nabla_{x_i}\delta_m\Kc_0G(t,m)(x)
=
\E\left[
\delta_mG(M^{t,m}_T)(N^{t,x}_T)
\frac{W_T^i-W_t^i}{T-t}
\right],
\]
and therefore
\[
\left|
\nabla_{x_i}\delta_m\Kc_0G(t,m)(x)
\right|
\leq
\|\delta_mG\|_\infty
\frac{\E[|W_T^i-W_t^i|]}{T-t}
\leq
\frac{C}{\sqrt{T-t}} .
\]
Consequently,
\[
\E\left[\int_0^T |Z(t,X_t,\nu_t)|\,\mathrm{d}t\right]
\leq
C\int_0^T \frac{\mathrm{d}t}{\sqrt{T-t}}
<\infty .
\]

It remains to verify the square-integrability of the common-noise integrand
\[
\E\bigl[Z(t,X_t,\nu_t)\mid\Gc_t\bigr]\cdot\sigma_\circ .
\]
For this, we use the backward representation already obtained for $Y$. Along
an admissible flow driven by $\beta$, one has
\[
\mathrm{d}Y(t,\nu_t)
=
\E\Big[
Z(t,X_t,\nu_t)\cdot \beta(t,X_t,\nu_t)
\,\Big|\,\Gc_t
\Big]\mathrm{d}t
+
\E\Big[
Z(t,X_t,\nu_t)
\,\Big|\,\Gc_t
\Big]\cdot\sigma_\circ\,\mathrm{d}W^\circ_t .
\]
Applying Itô's formula to $|Y(t,\nu_t)|^2$, we get
\[
\begin{aligned}
|Y(t,\nu_t)|^2
&=
|Y(0,\nu_0)|^2
+
2\int_0^t
Y(r,\nu_r)
\E\Big[
Z(r,X_r,\nu_r)\cdot \beta(r,X_r,\nu_r)
\,\Big|\,\Gc_r
\Big]\mathrm{d}r
\\
&\quad+
2\int_0^t
Y(r,\nu_r)
\E\Big[
Z(r,X_r,\nu_r)
\,\Big|\,\Gc_r
\Big]\cdot\sigma_\circ\,\mathrm{d}W^\circ_r
+
\int_0^t
\left|
\sigma_\circ^\top
\E\Big[
Z(r,X_r,\nu_r)
\,\Big|\,\Gc_r
\Big]
\right|^2\mathrm{d}r .
\end{aligned}
\]
The stochastic integral is a true martingale after localization, and the
localization can be removed by the boundedness of $Y$ and the integrability
estimate on $Z$. Taking expectations therefore yields
\[
\begin{aligned}
\E\left[
\int_0^t
\left|
\sigma_\circ^\top
\E\Big[
Z(r,X_r,\nu_r)
\,\Big|\,\Gc_r
\Big]
\right|^2\mathrm{d}r
\right]
&\leq
\E|Y(t,\nu_t)|^2
+
\E|Y(0,\nu_0)|^2
\\
&\quad+
2\E\left[
\int_0^t
|Y(r,\nu_r)|
\left|
\E\Big[
Z(r,X_r,\nu_r)\cdot\beta(r,X_r,\nu_r)
\,\Big|\,\Gc_r
\Big]
\right|
\mathrm{d}r
\right].
\end{aligned}
\]
Using the boundedness of $Y$ and $\beta$, together with
\[
\E\int_0^T |Z(r,X_r,\nu_r)|\,\mathrm{d}r<\infty,
\]
we obtain a bound independent of $t<T$. Letting $t\uparrow T$, by monotone
convergence,
\[
\E\left[
\int_0^T
\left|
\sigma_\circ^\top
\E\Big[
Z(r,X_r,\nu_r)
\,\Big|\,\Gc_r
\Big]
\right|^2\mathrm{d}r
\right]
<\infty .
\]
This proves the required square-integrability of the common-noise martingale
integrand.

\medskip

\noindent
\textbf{Step 2. Identification of $Y$.}

Let $(Y^i,Z^i)$, $i=1,2$, be two solutions. Fix an initial time
$e\in[0,T]$ and an initial law $m\in\Pc(\R^d)$. We first choose the
admissible flow corresponding to the drift $\beta=0$. Thus
\[
\nu_r=M^{e,m}_r,
\]
where $M^{e,m}$ is the conditional Brownian flow starting from $m$ at time
$e$. Applying the martingale representation identity at time $e$, we get
\[
Y^i(e,m)
=
G(M^{e,m}_T)
-
\int_e^T
\E\Big[
Z^i(r,N^{e,U}_r,M^{e,m}_r)
\,\Big|\,\Gc_r
\Big]\cdot\sigma_\circ\,\mathrm{d}W^\circ_r.
\]
Taking expectation yields
\[
Y^i(e,m)
=
\E\bigl[G(M^{e,m}_T)\bigr]
=
\Kc_0G(e,m).
\]
Since $e$ and $m$ are arbitrary, we obtain
\[
Y^1=Y^2=\Kc_0G.
\]

\medskip

\noindent
\textbf{Step 3. Identification of $Z$.}

We now compare the two representations. Since $Y^1=Y^2$, subtracting the two
identities gives, for every admissible flow $(\nu_r)$ driven by $\beta$,
\[
\begin{aligned}
0
=
\int_t^T
\E\Big[
\bigl(Z^1-Z^2\bigr)(r,X_r,\nu_r)
\cdot\beta(r,X_r,\nu_r)
\,\Big|\,\Gc_r
\Big]\mathrm{d}r
+
\int_t^T
\E\Big[
\bigl(Z^1-Z^2\bigr)(r,X_r,\nu_r)
\,\Big|\,\Gc_r
\Big]\cdot\sigma_\circ\,\mathrm{d}W^\circ_r .
\end{aligned}
\]
The finite-variation part and the martingale part must vanish separately.
Therefore,
\[
\E\Big[
\bigl(Z^1-Z^2\bigr)(r,X_r,\nu_r)
\cdot\beta(r,X_r,\nu_r)
\,\Big|\,\Gc_r
\Big]=0
\]
for Lebesgue-a.e. $r$, and
\[
\E\Big[
\bigl(Z^1-Z^2\bigr)(r,X_r,\nu_r)
\,\Big|\,\Gc_r
\Big]\cdot\sigma_\circ=0
\]
for Lebesgue-a.e. $r$. To recover pointwise uniqueness of $Z$, it is enough
to use the first identity and the arbitrariness of the drift $\beta$.
Indeed, fix $e\in[0,T]$ and $m\in\Pc(\R^d)$. Choose $Z^2(t,x,m)=\nabla_x \delta_m \Kc_0G(t,m)(x)$ and a smooth bounded drift
$\beta$. For such drifts, the map
\[
r\longmapsto
\E\Big[
Z^1(r,X_r,\nu_r)
\cdot\beta(r,X_r,\nu_r)
\Big]= \E\Big[Z^2(r,X_r,\nu_r)
\cdot\beta(r,X_r,\nu_r)
\Big]
\]
can be taken continuous, since $r \mapsto Z^2(r,X_r,\nu_r)$ is continuous. Hence the a.e. equality may be evaluated at $r=e$. Since
$\nu_e=m$ and $X_e\sim m$, we obtain
\[
\int_{\R^d}
\bigl(Z^1(e,x,m)-Z^2(e,x,m)\bigr)
\cdot\beta(e,x,m)\,m(\mathrm{d}x)
=0.
\]
The value of $\beta(e,\cdot,m)$ can be chosen as an arbitrary smooth bounded
test vector field. Therefore,
\[
Z^1(e,x,m)=Z^2(e,x,m),
\qquad
m(\mathrm{d}x)\text{-a.e.}
\]
Since $e$ and $m$ are arbitrary, this proves uniqueness of $Z$.
The proof is complete.
\end{proof}

\section{A mixed finite-dimensional/Wasserstein formula} \label{sec:mixed_formula}

We finally record a useful extension in which the terminal functional depends
both on a finite-dimensional state and on a measure argument. This type of
formula is useful when one studies systems containing both a representative
state variable and a conditional distribution.

\smallskip
Let $\widehat B:[0,T]\times\R^d\to\R^d$
be a bounded map,  Lipschitz in space uniformly in time. For $(t,p)\in[0,T]\times\R^d$, define $\widehat S^{t,p}_r=p$, $r\in[0,t],$
and
\[
\mathrm{d}\widehat S^{t,p}_r
=
\widehat B(r,\widehat S^{t,p}_r)\,\mathrm{d}r
+\mathrm{d}W_r+\sigma_\circ\,\mathrm{d}W^\circ_r.
\]

Let $\widehat G:\R^d\times\Pc(\R^d)\to\R$ be a bounded map such that $m \mapsto \widehat{G}(x,m)$ satisfies \Cref{assum:G}
uniformly in $x$, and define
\[
\widehat\Gr(s,t,m,p)
:=
\E\Big[
\widehat G(\widehat S^{t,p}_s,\mu^{t,m}_s)
\Big].
\]
Let $e \in [0,s]$ be an initial time, $X_0$ be an $\R^d$--valued random variable independent of $(W,W^\circ)$, 
$X_\cdot:=X_0 + W_{\cdot \vee e} - W_e + \sigma_\circ \cdot (W^\circ_{\cdot \vee e} - W^\circ_e)$. 

\begin{proposition}[Mixed backward flow formula]
Let $(\nu_r)_{r \in [e,s]}$ be generated by the drift $\beta$, and let $\widehat X$ be
generated by $\widehat\beta$. Then, for $e\le t\le s\le T$,
\[
\begin{aligned}
\widehat\Gr(s,t,\nu_t,\widehat X_t)
&=
\widehat G(\widehat X_s,\nu_s)
\\
&\quad+
\int_t^s
\E\Big[
\nabla_x\delta_m\widehat\Gr(s,r,\nu_r,\widehat X_r)(X_r)
\cdot
\bigl(B(r,X_r,\nu_r)-\beta(r,X_r,\nu_r)\bigr)
\,\Big|\,\Gc_r
\Big]\mathrm{d}r
\\
&\quad+
\int_t^s
\nabla_p\widehat\Gr(s,r,\nu_r,\widehat X_r)
\cdot
\widehat B(r,\widehat X_r)\,\mathrm{d}r
\\
&\quad-
\int_t^s
\E\Big[
\nabla_x\delta_m\widehat\Gr(s,r,\nu_r,\widehat X_r)(X_r)
\,\Big|\,\Gc_r
\Big]\cdot\sigma_\circ\,\mathrm{d}W^\circ_r
\\
&\quad-
\int_t^s
\nabla_p\widehat\Gr(s,r,\nu_r,\widehat X_r)
\cdot
\bigl(\mathrm{d}W_r+\sigma_\circ\,\mathrm{d}W^\circ_r\bigr).
\end{aligned}
\]
\end{proposition}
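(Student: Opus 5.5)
The plan is to mimic the proof of \Cref{thm:backward_flow}, adding one finite-dimensional coordinate $p$ that is treated exactly like a tagged particle, except that it does not enter the empirical measure. I read the statement with $\widehat X$ driven by the same noise $W+\sigma_\circ W^\circ$ appearing in the last line. The displayed drift term $\nabla_p\widehat\Gr\cdot\widehat B$ corresponds to $\widehat\beta=0$, i.e.\ $\widehat X$ of the Brownian form $X_\cdot$ introduced just before the proposition. For general $\widehat\beta$ the same argument gives $\widehat B-\widehat\beta$ in its place, and I would prove that version and then specialise.

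\textbf{Step 1 (particle approximation).} Fix $n$ and $(\xbb,p)\in(\R^d)^n\times\R^d$. Let the reference system $\Sbb^{n,r,\xbb}$ be driven by $B$ with noises $W^1,\dots,W^n$. Let $\widehat S^{r,p}$ be driven by $\widehat B$ with an additional idiosyncratic noise $W$, independent of the $W^i$, and the common noise $W^\circ$. Set
\[
\widehat\Gr^n(s,r,\xbb,p):=\E\bigl[\widehat G(\widehat S^{r,p}_s,\mu^{n,r,\xbb}_s)\bigr].
\]
This is a Markov expectation for the $(n+1)$-component diffusion. Hence $r\mapsto\widehat\Gr^n(s,r,\Sbb^n_r,\widehat S_r)$ is a martingale for the filtration generated by $(U^i,W^i,W,W^\circ)$. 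By martingale representation its integrand is
\[
\sum_i\nabla_{x_i}\widehat\Gr^n\cdot(\mathrm dW^i+\sigma_\circ\mathrm dW^\circ)+\nabla_p\widehat\Gr^n\cdot(\mathrm dW+\sigma_\circ\mathrm dW^\circ).
\]
Identifying the integrand with the gradients uses the regularity of $\widehat\Gr^n$. Differentiability in $\xbb$ follows as in Step~2 of \Cref{prop:integral_formula_flow}. Differentiability in $p$ uses only that $\widehat G$ is bounded: I would obtain it by a Bismut--Elworthy--Li formula for the nondegenerate diffusion $\widehat S$, since $\widehat B$ is Lipschitz in space. A Girsanov change of measure, exactly as in Step~3 of \Cref{thm:backward_flow}, then replaces $B$ by $\beta$ in the $n$ particles and $\widehat B$ by $\widehat\beta$ in the extra coordinate. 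Evaluated along $(\Xbb^n,\widehat X)$, this produces the finite-$n$ identity with drift corrections $\sum_i\nabla_{x_i}\widehat\Gr^n\cdot(B-\beta)$ and $\nabla_p\widehat\Gr^n\cdot(\widehat B-\widehat\beta)$.

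\textbf{Step 2 (conditioning).} Condition on $\Gc_r\vee\sigma(W_u:u\le r)\vee\sigma(\widehat X_0)$, not only on $\Gc_r$. The $W^i$-integrals vanish under this conditioning. The $p$-terms remain, since $\widehat X$ and $W$ are measurable with respect to it. Because $\widehat X$ is independent of the $W^i$ given $W^\circ$, the $\xbb$-gradient terms become conditional expectations given $\Gc_r$ with $\widehat X_r$ frozen as a parameter.

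\textbf{Step 3 (passage to the limit).} The $\xbb$-gradients are identified through \Cref{cor:derivative_propagated}, applied to $m\mapsto\widehat G(q,\cdot)$ uniformly in the frozen parameter $q$. The diagonal/off-diagonal decomposition and the tightness argument of Steps 4--6 in \Cref{prop:integral_formula_flow} carry over, now with the extra parameter $q=\widehat X_r$ carried along. For the $p$-gradient I would show $\nabla_p\widehat\Gr^n(s,r,\Xbb^n_r,p)\to\nabla_p\widehat\Gr(s,r,\nu_r,p)$, locally uniformly in $p$, in probability. This follows from the Bismut representation, $\mu^{n}_s\Rightarrow\mu^{t,\nu_r}_s$ conditionally, and continuity of $\widehat G$. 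Dominated convergence then applies to the $\mathrm dr$ integrals. The stochastic integrals converge by the Itô isometry, using the bound $C/\sqrt{s-r}$ on $\nabla_p\widehat\Gr$, which is square integrable in $r$.

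\textbf{Main obstacle.} The hard part is the joint limit in Step~3. I need uniform-in-$q$ convergence of the particle gradients of the measure variable when $\widehat G$ has no regularity in $x$. The $\nabla_p$ singularity at $r=s$ must also be controlled when passing the limit inside the stochastic integral. I would handle both by working on $[t,s-\varepsilon]$, using continuity of $q\mapsto\nabla_x\delta_m\widehat G(q,\cdot)$ in the Bismut-smoothed sense, and then letting $\varepsilon\downarrow0$ with the $L^2$ bound.
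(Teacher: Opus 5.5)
Your route is the one the paper intends. Its proof is only a short sketch: the formula is a direct combination of the finite-dimensional argument in $p$ (reference drift $\widehat B$) with the particle/Girsanov/conditioning argument of \Cref{thm:backward_flow} in the measure variable, and the martingale part splits into the noise of $\widehat X$ and the common-noise fluctuation of $\nu$.

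Your preliminary readings are correct, and the statement needs them to be true:
\begin{itemize}
\item the displayed term $\nabla_p\widehat\Gr\cdot\widehat B$ is the case $\widehat\beta=0$ of $\nabla_p\widehat\Gr\cdot(\widehat B-\widehat\beta)$;
\item the conditional expectation given $\Gc_r$ must be read with $\widehat X_r$ frozen, i.e.\ as $\int\nabla_x\delta_m\widehat\Gr(s,r,\nu_r,\widehat X_r)(x)\cdot(B-\beta)(r,x,\nu_r)\,\nu_r(\mathrm dx)$. Your choice of an idiosyncratic noise for $\widehat S$ independent of the $W^i$ produces exactly this.
\end{itemize}
Bismut--Elworthy--Li for $\nabla_p$ is also the right tool, since $\widehat G$ has no regularity in $x$.

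The step that fails is the one you single out as the main obstacle. The bound $|\nabla_p\widehat\Gr(s,r,\cdot)|\le C/\sqrt{s-r}$ is integrable but \emph{not} square integrable, since $\int_t^s C^2(s-r)^{-1}\,\mathrm dr=+\infty$. It also cannot be improved pointwise: take $\widehat G(x,m)$ to be the indicator of a half-space in $x$. So the It\^o isometry with this bound controls neither the $\nabla_p$-integrals on $[s-\varepsilon,s]$ uniformly in $n$, nor the removal of the cutoff as $\varepsilon\downarrow0$.

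The square integrability has to come from the dynamics, as in the existence step of \Cref{prop:martingale_representation_kernel}. The value process is bounded by $\|\widehat G\|_\infty$, and its drift density is bounded by $C\bigl(1+(s-r)^{-1/2}\bigr)$. It\^o's formula for its square on $[t,s-\varepsilon]$ then gives $\E\int_t^{s-\varepsilon}|\text{integrands}|^2\,\mathrm dr\le C$ uniformly in $\varepsilon$. Hence the stochastic integrals over $[t,s-\varepsilon]$ converge in $L^2$ to those over $[t,s]$.

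Even then, letting $\varepsilon\downarrow0$ in the identity on $[t,s-\varepsilon]$ needs a separate argument that $\widehat\Gr(s,s-\varepsilon,\nu_{s-\varepsilon},\widehat X_{s-\varepsilon})\to\widehat G(\widehat X_s,\nu_s)$. Because $\widehat G$ is only measurable in $x$, this is a martingale-convergence statement, not a continuity one. One way to obtain it:
\begin{itemize}
\item freeze the measure argument at some $\nu_{r_0}$; the error is controlled by the Lipschitz bound in the measure argument, uniform in $x$, that follows from bounded $\nabla_x\delta_m\widehat G$;
\item for the finite-dimensional part, use L\'evy's upward theorem for the reference diffusion $\widehat S$, and transfer it to $\widehat X$ by the finite-dimensional Girsanov theorem;
\item let $r_0\uparrow s$.
\end{itemize}
With these two ingredients replacing the $L^2$ claim, your argument goes through.
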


\begin{proof}
The proof is a direct combination of the finite-dimensional and Wasserstein
arguments. The variable $p$ is propagated by the reference drift
$\widehat B$, while the measure variable is propagated by the reference
McKean--Vlasov drift $B$. 
The martingale part splits into two pieces: the direct noise acting on
$\widehat X$, and the common-noise fluctuation of the conditional law. This
gives the announced identity.
\end{proof}

\section{A first step toward a path-dependent framework}
\label{sec:path_dependent}

A natural extension of the martingale representation problem developed above
would consist in considering terminal functionals depending on the entire path
of the conditional law, namely,
\[
G:
C\bigl([0,T];\Pc(\R^d)\bigr)
\longrightarrow
\R,
\qquad
(m_t)_{t\in[0,T]}
\longmapsto
G\bigl((m_t)_{t\in[0,T]}\bigr).
\]
A complete treatment of this problem would require working directly on a
space of measure-valued paths and developing an appropriate notion of
differentiation with respect to their past trajectories. We leave this
general path-dependent framework for future work.

\medskip
The dynamic programming structure of the McKean--Vlasov semigroup nevertheless
allows us to accommodate a discrete form of path dependence. More precisely,
we consider functionals depending on the values of the conditional law at a
finite number of deterministic times. The resulting representation is
obtained by iterating the Markovian representation established in the previous
sections.

\subsection*{Discrete path-dependent functionals}

Let $k\geq 1$, and fix a deterministic grid
\[
0=t_0<t_1<\cdots<t_k=T.
\]
A discrete path-dependent terminal functional is a map
\[
G:
\Pc(\R^d)^{k+1}
\longrightarrow
\R.
\]
Given a measure-valued path
$
\boldsymbol m=(m_t)_{t\in[0,T]},
$
we use the notation
\[
G(\boldsymbol m)
:=
G(m_{t_0},\ldots,m_{t_k}).
\]

We say that $G$ admits bounded continuous linear functional derivatives if,
for every $j\in\{0,\ldots,k\}$, there exists a bounded map
\[
\delta_jG:
\Pc(\R^d)^{k+1}\times\R^d
\longrightarrow
\R
\]
which is continuous in $\Pc(\R^d)^{k+1}$ and such that, for every
$\boldsymbol m=(m_0,\ldots,m_k)\in\Pc(\R^d)^{k+1}$ and
$m'_j\in\Pc(\R^d)$,
\[
\begin{aligned}
&G(m_0,\ldots,m_{j-1},m'_j,m_{j+1},\ldots,m_k)
-
G(m_0,\ldots,m_{j-1},m_j,m_{j+1},\ldots,m_k)
\\
&\quad
=
\int_0^1
\int_{\R^d}
\delta_jG
\bigl(
    m_0,\ldots,m_{j-1},
    (1-\lambda)m_j+\lambda m'_j,
    m_{j+1},\ldots,m_k
\bigr)(x)
\,
(m'_j-m_j)(\mathrm dx)
\,\mathrm d\lambda.
\end{aligned}
\]
Thus, $\delta_jG$ denotes the linear functional derivative of $G$
with respect to its $j$-th measure argument.

Let
\[
\Cc_{\Pc}
:=
C\bigl([0,T];\Pc(\R^d)\bigr),
\]
endowed, for instance, with the topology of uniform convergence 
 induced by any metric
$\mathrm d_{\mathrm w}$ that metrizes the weak convergence on $\Pc(\R^d)$.

Let $E$ be a Polish space. We say that a Borel map
\[
F:[0,T]\times\Cc_{\Pc}\longrightarrow E
\]
is progressively measurable with respect to the grid
$(t_j)_{0\leq j\leq k}$ if, for every $j\in\{0,\ldots,k-1\}$, there
exists a Borel map
\[
F_j:
[t_j,t_{j+1})
\times
\Pc(\R^d)
\times
\Pc(\R^d)^{j+1}
\longrightarrow E
\]
such that
\[
F(t,\boldsymbol m)
=
F_j
\bigl(
    t,m_t;
    m_{t_0},\ldots,m_{t_j}
\bigr),
\qquad
t\in[t_j,t_{j+1}).
\]
In other words, between two consecutive grid times, the map may depend on the
current measure $m_t$ and on the values of the path observed at the previous
grid times.

Similarly, a drift
\[
B:
[0,T]\times\R^d\times\Cc_{\Pc}
\longrightarrow
\R^d
\]
is progressively measurable with respect to the grid if
\[
B(t,x,\boldsymbol m)
=
B_j
\bigl(
    t,x,m_t;
    m_{t_0},\ldots,m_{t_j}
\bigr),
\qquad
t\in[t_j,t_{j+1}),
\]
for suitable Borel maps $B_j$.

We say that a flow
\[
\boldsymbol\mu=(\mu_t)_{t\in[0,T]}
\]
is admissible if there exist a progressively measurable drift $B$, satisfying
the standing boundedness and Lipschitz assumptions, and an adapted process
$X$ such that
\[
\mu_t=\Lc(X_t\mid\Gc_t),
\qquad t\in[0,T],
\]
and
\[
\mathrm dX_t
=
B(t,X_t,\boldsymbol\mu)\,\mathrm dt
+
\mathrm dW_t
+
\sigma_\circ\,\mathrm dW^\circ_t.
\]
As usual, conditioning with respect to $\Gc_T$ instead of $\Gc_t$ gives
the same conditional law at time $t$ under the present filtration
assumptions.

\begin{proposition}
\label{prop:discrete_path_martingale_representation}
Assume that $G:\Pc(\R^d)^{k+1}\to\R$ is bounded and continuous and admits
bounded continuous linear functional derivatives with respect to each of its
measure arguments.

Then there exist maps
\[
Y:
[0,T]\times\Cc_{\Pc}
\longrightarrow
\R
\]
and
\[
Z:
[0,T]\times\R^d\times\Cc_{\Pc}
\longrightarrow
\R^d,
\]
progressively measurable with respect to the grid, such that
\[
Y(T,\boldsymbol m)
=
G(m_{t_0},\ldots,m_{t_k}).
\]
Moreover, for every admissible flow
$\boldsymbol\mu=(\mu_t)_{t\in[0,T]}$ associated with a process $X$,
and every $t\in[0,T]$,
\[
\begin{aligned}
Y(t,\boldsymbol\mu)
&=
G(\mu_{t_0},\ldots,\mu_{t_k})
\\
&\quad
-
\int_t^T
\E\left[
    Z(r,X_r,\boldsymbol\mu)
    \cdot
    B(r,X_r,\boldsymbol\mu)
    \,\middle|\,
    \Gc_r
\right]
\,\mathrm dr
\\
&\quad
-
\int_t^T
\E\left[
    Z(r,X_r,\boldsymbol\mu)
    \,\middle|\,
    \Gc_r
\right]
\cdot
\sigma_\circ\,\mathrm dW^\circ_r.
\end{aligned}
\]
\end{proposition}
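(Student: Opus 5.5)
The plan is a backward induction on the grid that uses the driftless Markovian result, \Cref{prop:martingale_representation_kernel} together with the identity \eqref{eq:B=0}, on each interval $[t_j,t_{j+1}]$, with the already observed measures frozen as parameters. I would define terminal functionals $G_k,G_{k-1},\dots,G_0$ recursively. Set $G_k:=G$. On the last interval $[t_{k-1},t_k]$, for fixed frozen measures $(m_0,\dots,m_{k-1})$, consider the map $m\mapsto G(m_0,\dots,m_{k-1},m)$. It is bounded and continuous, and it has a bounded linear functional derivative $\delta_kG$ which is continuous in the measure variable. So the hypotheses of \Cref{prop:martingale_representation_kernel} hold, with terminal time $T=t_k$ and initial times in $[t_{k-1},t_k]$. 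For $t\in[t_{k-1},t_k)$ I then set
\[
Y_{k-1}(t,m;m_0,\dots,m_{k-1}):=\Kc_0\bigl[G(m_0,\dots,m_{k-1},\cdot)\bigr](t,m),
\]
and let $Z_{k-1}$ be its Lions derivative $\nabla_x\delta_m Y_{k-1}$, computed through the Gaussian integration-by-parts formula of Step 1 of that proof. Next, define the new terminal functional at time $t_{k-1}$ by
\[
G_{k-1}(m_0,\dots,m_{k-1}):=Y_{k-1}(t_{k-1},m_{k-1};m_0,\dots,m_{k-1}),
\]
and repeat the construction on $[t_{k-2},t_{k-1}]$ with $G_{k-1}$ in place of $G$, and so on down to $[t_0,t_1]$. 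Gluing the pieces as in the definition of grid-progressive measurability gives $Y$ and $Z$, and $Y(T,\boldsymbol m)=G$ by construction.

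For the representation along an admissible flow $\boldsymbol\mu$ driven by $B(t,x,\boldsymbol\mu)=B_j(t,x,\mu_t;\mu_{t_0},\dots,\mu_{t_j})$, I would work on $[t_j,t_{j+1}]$ and condition on $\Gc_{t_j}$. Given $\Gc_{t_j}$, the past values $\mu_{t_0},\dots,\mu_{t_j}$ are fixed. The flow restricted to $[t_j,t_{j+1}]$ is then a Markovian admissible flow started at time $e=t_j$ from the law $\mu_{t_j}$, with drift $\beta(r,x,m)=B_j(r,x,m;\mu_{t_0},\dots,\mu_{t_j})$. Applying \eqref{eq:B=0} on this interval (terminal time $t_{j+1}$) yields
\[
Y(t,\boldsymbol\mu)=G_{j+1}(\mu_{t_0},\dots,\mu_{t_{j+1}})-\int_t^{t_{j+1}}\E[Z\cdot B\mid\Gc_r]\,\mathrm dr-\int_t^{t_{j+1}}\E[Z\mid\Gc_r]\cdot\sigma_\circ\,\mathrm dW^\circ_r .
\]
The key consistency point is that $G_{j+1}(\mu_{t_0},\dots,\mu_{t_{j+1}})=Y(t_{j+1},\boldsymbol\mu)$, which is exactly the left-hand side of the identity on the next interval. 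Telescoping these identities from $j$ up to $k-1$ gives the stated formula for every $t$.

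The main obstacle is justifying the use of \Cref{prop:martingale_representation_kernel} with random frozen parameters and a recursively defined terminal functional.

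\textbf{Random parameters.} I would show that all objects are jointly Borel in the parameters. Then I would apply the proposition pathwise, using the independence of $(\mu_{t_0},\dots,\mu_{t_j})$, which is $\Gc_{t_j}$-measurable, from the increments of the noises after $t_j$. Alternatively, I would enlarge the initial condition to carry these parameters.

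\textbf{Regularity of $G_j$.} I need $G_j$ to satisfy the same hypotheses as $G$: bounded, continuous, and with bounded linear functional derivatives in each argument that are continuous in the measures. Boundedness and continuity follow from the corresponding properties of $G$ and the weak continuity of $m\mapsto M^{t,m}_T$. The derivative in a frozen argument $m_i$ with $i<j$ is obtained by differentiating under the expectation $\E[G(\dots,M_T)]$.

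The delicate term is the derivative in the last argument $m_j$, which enters twice, both as a frozen parameter and as the starting law. By the chain rule, this derivative is the sum of the frozen-argument derivative and $\delta_m\Kc_0 G(t_j,m_j)(x)$. For the latter, I would use the representation of Remark (ii) with $B=0$, where $\overline Y\equiv0$. This gives $\E[\delta_mG(M^{t,m}_T)(N^{t,x}_T)]$, which is bounded and continuous, so the induction closes.
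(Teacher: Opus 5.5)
Your proposal is correct and follows essentially the same route as the paper: a backward induction over the grid in which, on each $[t_j,t_{j+1}]$, the history $(\mu_{t_0},\dots,\mu_{t_j})$ is frozen, the driftless Markovian representation is applied conditionally on it to $m\mapsto G_{j+1}(m_0,\dots,m_j,m)$ (the paper's $\Phi_j(m;m_0,\dots,m_j)$), regularity is propagated through the chain rule for the measure that enters twice, and the interval-wise identities are telescoped via the compatibility $G_{j+1}(\mu_{t_0},\dots,\mu_{t_{j+1}})=Y(t_{j+1},\boldsymbol\mu)$. Your explicit formula $\E\bigl[\delta_{j+1}G_{j+1}(m_0,\dots,m_j,M^{t_j,m}_{t_{j+1}})(N^{t_j,x}_{t_{j+1}})\bigr]$ for the current-measure derivative, and your remarks on joint measurability and conditioning on $\Gc_{t_j}$, make precise two points that the paper only asserts (its ``parameterized version of the semigroup regularity result'' and ``applied conditionally on the history'').
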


\begin{proof}
The proof follows from a backward induction over the grid. At each step, the
values of the measure flow observed at the previous grid times are regarded
as frozen parameters, while the current measure remains the Markovian state
variable.

For $j\in\{0,\ldots,k-1\}$, write
\[
\boldsymbol m_j
:=
(m_0,\ldots,m_j)
\in
\Pc(\R^d)^{j+1}.
\]
We recursively construct terminal functions
\[
\Phi_j:
\Pc(\R^d)\times\Pc(\R^d)^{j+1}
\longrightarrow
\R
\]
and continuation values
\[
\Vc_j:
[t_j,t_{j+1}]
\times
\Pc(\R^d)
\times
\Pc(\R^d)^{j+1}
\longrightarrow
\R.
\]

We start from the last interval $[t_{k-1},t_k]$. For
$\boldsymbol m_{k-1}=(m_0,\ldots,m_{k-1})$, set
\[
\Phi_{k-1}(m;\boldsymbol m_{k-1})
:=
G(m_0,\ldots,m_{k-1},m).
\]
For every fixed history $\boldsymbol m_{k-1}$, let
\[
\Vc_{k-1}
\bigl(
    t,m;\boldsymbol m_{k-1}
\bigr),
\qquad
t\in[t_{k-1},t_k],
\]
be the Markovian continuation value associated with the terminal functional
\[
m\longmapsto\Phi_{k-1}(m;\boldsymbol m_{k-1}).
\]
Equivalently, using the driftless McKean--Vlasov semigroup with terminal time
$t_k$,
\[
\Vc_{k-1}
\bigl(
    t,m;\boldsymbol m_{k-1}
\bigr)
=
\Kc_0^{t_k}
\left[
    \Phi_{k-1}(\,\cdot\,;\boldsymbol m_{k-1})
\right](t,m).
\]
We also set, for $t<t_k$,
\[
\Zc_{k-1}
\bigl(
    t,x,m;\boldsymbol m_{k-1}
\bigr)
:=
\nabla_x\delta_m
\Vc_{k-1}
\bigl(
    t,m;\boldsymbol m_{k-1}
\bigr)(x).
\]

The Markovian martingale representation established above, applied
conditionally on the history
\[
(\mu_{t_0},\ldots,\mu_{t_{k-1}}),
\]
gives, for $t\in[t_{k-1},t_k]$,
\[
\begin{aligned}
&\Vc_{k-1}
\bigl(
    t,\mu_t;
    \mu_{t_0},\ldots,\mu_{t_{k-1}}
\bigr)
\\
&\quad
=
G(\mu_{t_0},\ldots,\mu_{t_k})
\\
&\qquad
-
\int_t^{t_k}
\E\left[
    \Zc_{k-1}
    \bigl(
        r,X_r,\mu_r;
        \mu_{t_0},\ldots,\mu_{t_{k-1}}
    \bigr)
    \cdot
    B(r,X_r,\boldsymbol\mu)
    \,\middle|\,
    \Gc_r
\right]
\,\mathrm dr
\\
&\qquad
-
\int_t^{t_k}
\E\left[
    \Zc_{k-1}
    \bigl(
        r,X_r,\mu_r;
        \mu_{t_0},\ldots,\mu_{t_{k-1}}
    \bigr)
    \,\middle|\,
    \Gc_r
\right]
\cdot
\sigma_\circ\,\mathrm dW^\circ_r.
\end{aligned}
\]

We now proceed backward. Suppose that, for some
$j\in\{0,\ldots,k-2\}$, the continuation value
\[
\Vc_{j+1}
\bigl(
    t,m;
    m_0,\ldots,m_{j+1}
\bigr)
\]
has already been constructed on $[t_{j+1},t_{j+2}]$. Define the terminal
functional on the preceding interval by
\[
\Phi_j(m;\boldsymbol m_j)
:=
\Vc_{j+1}
\bigl(
    t_{j+1},m;
    m_0,\ldots,m_j,m
\bigr).
\]
The measure $m$ appears twice in the right-hand side: it is both the current
measure at time $t_{j+1}$ and the newly recorded value of the discrete
history.

We next verify that $m\mapsto\Phi_j(m;\boldsymbol m_j)$ satisfies the
regularity assumptions required by the Markovian representation theorem.
The parameterized version of the semigroup regularity result implies that
$\Vc_{j+1}$ admits bounded continuous linear functional derivatives with
respect to its current measure argument and with respect to every component
of the frozen history.

Consequently, the chain rule for linear functional derivatives yields
\[
\begin{aligned}
\delta_m\Phi_j(m;\boldsymbol m_j)(x)
&=
\delta_m^{\mathrm{cur}}
\Vc_{j+1}
\bigl(
    t_{j+1},m;
    m_0,\ldots,m_j,m
\bigr)(x)
\\
&\quad
+
\delta_{j+1}^{\mathrm{hist}}
\Vc_{j+1}
\bigl(
    t_{j+1},m;
    m_0,\ldots,m_j,m
\bigr)(x),
\end{aligned}
\]
where the first term denotes the derivative with respect to the current
measure variable and the second one denotes the derivative with respect to
the last component of the discrete history. Both terms are bounded and
continuous. Hence $\Phi_j$ is again an admissible terminal functional.

For fixed $\boldsymbol m_j$, we may therefore define
\[
\Vc_j(t,m;\boldsymbol m_j)
=
\Kc_0^{t_{j+1}}
\left[
    \Phi_j(\,\cdot\,;\boldsymbol m_j)
\right](t,m),
\qquad
t\in[t_j,t_{j+1}],
\]
and
\[
\Zc_j(t,x,m;\boldsymbol m_j)
:=
\nabla_x\delta_m
\Vc_j(t,m;\boldsymbol m_j)(x),
\qquad
t<t_{j+1}.
\]

By construction,
\[
\Vc_j(t_{j+1},m;\boldsymbol m_j)
=
\Phi_j(m;\boldsymbol m_j)
=
\Vc_{j+1}
\bigl(
    t_{j+1},m;
    \boldsymbol m_j,m
\bigr).
\]
This is precisely the compatibility relation that connects two consecutive
time intervals.

Applying the Markovian representation on $[t_j,t_{j+1}]$, conditionally on
the frozen history
\[
(\mu_{t_0},\ldots,\mu_{t_j}),
\]
we obtain, for $t\in[t_j,t_{j+1}]$,
\[
\begin{aligned}
&\Vc_j
\bigl(
    t,\mu_t;
    \mu_{t_0},\ldots,\mu_{t_j}
\bigr)
\\
&\quad
=
\Phi_j
\bigl(
    \mu_{t_{j+1}};
    \mu_{t_0},\ldots,\mu_{t_j}
\bigr)
\\
&\qquad
-
\int_t^{t_{j+1}}
\E\left[
    \Zc_j
    \bigl(
        r,X_r,\mu_r;
        \mu_{t_0},\ldots,\mu_{t_j}
    \bigr)
    \cdot
    B(r,X_r,\boldsymbol\mu)
    \,\middle|\,
    \Gc_r
\right]
\,\mathrm dr
\\
&\qquad
-
\int_t^{t_{j+1}}
\E\left[
    \Zc_j
    \bigl(
        r,X_r,\mu_r;
        \mu_{t_0},\ldots,\mu_{t_j}
    \bigr)
    \,\middle|\,
    \Gc_r
\right]
\cdot
\sigma_\circ\,\mathrm dW^\circ_r.
\end{aligned}
\]
The compatibility relation gives
\[
\begin{aligned}
&\Phi_j
\bigl(
    \mu_{t_{j+1}};
    \mu_{t_0},\ldots,\mu_{t_j}
\bigr)
\\
&\qquad
=
\Vc_{j+1}
\bigl(
    t_{j+1},\mu_{t_{j+1}};
    \mu_{t_0},\ldots,\mu_{t_{j+1}}
\bigr).
\end{aligned}
\]

We now define the global maps $Y$ and $Z$. For
$t\in[t_j,t_{j+1})$, set
\[
Y(t,\boldsymbol m)
:=
\Vc_j
\bigl(
    t,m_t;
    m_{t_0},\ldots,m_{t_j}
\bigr)
\]
and
\[
Z(t,x,\boldsymbol m)
:=
\Zc_j
\bigl(
    t,x,m_t;
    m_{t_0},\ldots,m_{t_j}
\bigr).
\]
At the terminal time, set
\[
Y(T,\boldsymbol m)
:=
G(m_{t_0},\ldots,m_{t_k}).
\]
The measurability properties of the parameterized Markovian semigroup imply
that $Y$ and $Z$ are progressively measurable with respect to the grid.

Moreover, for every $j\in\{0,\ldots,k-2\}$,
\[
Y(t_{j+1}-,\boldsymbol m)
=
Y(t_{j+1},\boldsymbol m),
\]
because
\[
\Vc_j
\bigl(
    t_{j+1},m_{t_{j+1}};
    m_{t_0},\ldots,m_{t_j}
\bigr)
=
\Vc_{j+1}
\bigl(
    t_{j+1},m_{t_{j+1}};
    m_{t_0},\ldots,m_{t_{j+1}}
\bigr).
\]

Finally, let $t\in[t_j,t_{j+1})$. Applying the preceding local
representation successively on
\[
[t,t_{j+1}],\quad
[t_{j+1},t_{j+2}],\quad\ldots,\quad
[t_{k-1},t_k],
\]
and using the compatibility relation at each grid time, the intermediate
continuation values telescope. Since
\[
Y(T,\boldsymbol\mu)
=
G(\mu_{t_0},\ldots,\mu_{t_k}),
\]
we obtain
\[
\begin{aligned}
Y(t,\boldsymbol\mu)
&=
G(\mu_{t_0},\ldots,\mu_{t_k})
\\
&\quad
-
\int_t^T
\E\left[
    Z(r,X_r,\boldsymbol\mu)
    \cdot
    B(r,X_r,\boldsymbol\mu)
    \,\middle|\,
    \Gc_r
\right]
\,\mathrm dr
\\
&\quad
-
\int_t^T
\E\left[
    Z(r,X_r,\boldsymbol\mu)
    \,\middle|\,
    \Gc_r
\right]
\cdot
\sigma_\circ\,\mathrm dW^\circ_r.
\end{aligned}
\]
This concludes the proof.
\end{proof}

The proposition shows that discrete path dependence does not require a new
martingale representation theorem. It can instead be incorporated through a
backward dynamic programming procedure: on each interval
$[t_j,t_{j+1}]$, the past values
\[
\mu_{t_0},\ldots,\mu_{t_j}
\]
are frozen as parameters, while the current conditional law $\mu_t$ remains
the Markovian state variable. At the next grid time, the current law is added
to the discrete history, and the construction is iterated backward.

\bibliography{Martingale_regularization}

\end{document}